\theoremstyle{plain}
\newtheorem{thm}{Theorem}
\newtheorem{lem}[thm]{Lemma}
\newtheorem{cor}[thm]{Corollary}
\newtheorem{rem}[thm]{Remark}
\newtheorem{definition}[thm]{Definition}
\title{Skew-regular quaternary Hadamard matrices \thanks{Dedicated to the memory of Robert Woodrow, in honour of his profound contributions to combinatorics and his years of dedicated service to our community.}}
\author{ Hadi Kharaghani\thanks{
Department of Mathematics and Computer Science, University of Lethbridge,  
Lethbridge, Alberta, T1K 3M4, Canada.
email: \texttt{kharaghani@uleth.ca}}, \and
Vlad Zaitsev\thanks{
Department of Mathematics and Computer Science, University of Lethbridge,  
Lethbridge, Alberta, T1K 3M4, Canada.
email: \texttt{vlad.zaitsev@uleth.ca}}}
\date{}
\begin{document}
\setlength{\parindent}{20pt}

\maketitle 
\begin{abstract}

This paper introduces and investigates a novel class of skew-regular quaternary Hadamard matrices. For every odd prime power $p$, we establish the existence of these matrices for all orders $1+p^2$, each characterized by a constant row sum of $1-pi$. Motivated by the growing importance of large-excess matrices in the maximum determinant problem and quantum nonlocality, we demonstrate that this class provides a robust framework for generating Hadamard matrices with exceptionally large excess.

\end{abstract}

\section{Introduction}

In a recent study \cite{skew-r}, the authors introduced the concept of skew-regular Hadamard matrices. A Hadamard matrix $H$ of order $4n^2$ is defined as skew-regular if it has row sums $\pm 2n$ and can be expressed in the form $H=I+Q$, where $Q$ is skew-symmetric ($Q^T = -Q$). While skew-regular Hadamard matrices of order $4n^2$ are conjectured to exist for all odd $n$, it has been established that no such matrices exist for orders $16n^2$.
In this paper, we generalize the concept of skew-regularity to quaternary Hadamard matrices and provide a novel construction for a class of these matrices of order $1+p^2$ for every odd prime power $p$. By leveraging the skewness of these structures, we generate an infinite class of quaternary Hadamard matrices. We further utilize their regularity properties to demonstrate that each newly generated matrix is also regular. Additionally, we show that every skew-regular quaternary Hadamard matrix in this class can be used to construct a skew-semi-regular quaternary Hadamard matrix of twice the order. A significant feature of these matrices is their application in constructing Hadamard matrices with large excess. The paper concludes by presenting a quaternary Hadamard matrix of order $50$ that is equivalent to a skew-semi-regular quaternary Hadamard matrix with row sums $\pm 5 \pm 5i$, as well as a matrix with a constant row sum of $1-7i$.

\section{Preliminaries} 

A \emph{quaternary Hadamard matrix} $H$ of order $n$ is a square matrix whose entries belong to the set $\{1, -1, i, -i\}$ and which satisfies the property $HH^* = nI_n$, where $H^*$ denotes the conjugate transpose of $H$. When the entries are restricted to the real set $\{1, -1\}$, the matrix is simply referred to as a {Hadamard matrix}. Two quaternary Hadamard matrices $H$ and $K$ are said to be (Hadamard) equivalent if there exist quaternary permutation matrices $P$ and $Q$ such that $H = P K Q$. A similar equivalence notion applies to Hadamard matrices.

\begin{definition}
The \emph{excess} of a Hadamard matrix $H = [H_{ij}]$ is defined as the sum of all its entries.
\end{definition}

The maximum excess of a Hadamard matrix is a unique invariant, meaning that Hadamard matrices with different maximum excesses are not Hadamard equivalent. For a detailed discussion, refer to \cite{ks, kou, SY}.

A (quaternary) Hadamard matrix is \emph{normalized} if all entries in its first row and first column are equal to $1$. Any (quaternary) Hadamard matrix can be made equivalent to a normalized one through simple transformations. Matrix entries of a square matrix $M$ are denoted by $M_{ij}$.

Regular quaternary Hadamard matrices were introduced by Kharaghani and Seberry in \cite{ks}.

A (quaternary) Hadamard matrix $H$ is considered \emph{regular} if all its row (or column) sums are equal to the same (complex) number. The order of a regular quaternary Hadamard matrix is necessarily a sum of two integer squares, see \cite{ks}. Regular Hadamard matrices possess constant row and column sums; the same holds in the quaternary case.

\begin{definition}
    A \emph{skew-regular} quaternary Hadamard matrix is a regular skew-type quaternary Hadamard matrix. 
\end{definition}

The row sum of a skew-regular Hadamard matrix of order $4n^2$ is $\pm 2n$.
The situation for quaternary Hadamard matrices is as follows. 

\begin{lem}
    Let $H$ be a skew-regular quaternary Hadamard matrix of order $n$. Then the row sum of $H$ is of the form $1+ki$.
\end{lem}
\begin{proof}
    Let $s=a+ib$ be the row sum of $H$ and let $j_n$ denote the all ones column vector of size $n$. Then \[Hj_n =sj_n,\]which means $j_n$ is an eigenvector of $H$, with eigenvalue $s$. Since $H$ is skew-type, \[H+H^* = 2I_n.\] For any eigenvalue $\lambda$ and corresponding eigenvector $v$ \begin{align*}
        v^*(H+H^*)v &= v^*(2I_n) v  \\
        v^*Hv + v^*H^*v &= 2v^*v \\ 
        v^* (\lambda v) + (Hv)^* v &=  2 v^*v\\
        \lambda v^* v + \overline{\lambda}v^*v &= 2v^*v \\ 
        \lambda + \overline{\lambda} &= 2.
    \end{align*}Since $s$ is an eigenvalue, the real part of the row sum $s$ must be $1$.
\end{proof}

For a vector $v = (v_1, \dots, v_n)$, the diagonal matrix $D$ with entries $D_{ii} = v_i$ for $i = 1, \dots, n$ is denoted by $\mathrm{diag}(v)$.

\section{A class of skew-regular quaternary Hadamard matrices}

\begin{definition}
Let $p$ be an odd prime power. For $x \in \mathbb{F}_p$ define the \textit{quadratic character function}
\[
\chi_p(x)=
\begin{cases}
0, & \text{if }x=0,\\[4pt]
1, & \text{if } x\in\mathbb{F}_p\setminus\{0\} \text{ is a square in }\mathbb{F}_p\setminus\{0\},\\[4pt]
-1, & \text{otherwise.}
\end{cases}
\]  
\end{definition}

Let $p$ be an odd prime power and $q=p^2$. Note that every nonzero element of $\mathbb{F}_p$ is a square in $\mathbb{F}_q$. Fix an element $t \in \mathbb{F}_q \setminus \mathbb{F}_p$. For every $c \in \mathbb{F}_p$, define the set \[C_c = c t + \mathbb{F}_p.\]Note that $C_0=\mathbb{F}_p$ and $\{C_c \mid c \in \mathbb{F}_p\}$ is a set of $p$ additive cosets that partition $\mathbb{F}_q$. Moreover, the sum over all field elements of the quadratic character function is zero.
\[\sum_{x\in\mathbb{F}_q}\chi_q(x)=0.\]

\begin{lem}\label{lem:qcsumsconst}
   Let $p$ be an odd prime power and $q=p^2$. For $n,m \in \mathbb{F}_p \setminus \{0\}$, we have 
   \[\sum_{x\in C_n} \chi_q(x) = \sum_{x \in C_m}\chi_q(x).\]
\end{lem}
\begin{proof}
   By the definition of the cosets, there exists a $k = nm^{-1} \in \mathbb{F}_p\setminus \{0\}$ such that $C_n = \{ kx \mid x \in C_m \}$. Since $k\in \mathbb{F}_p \setminus \{0\}$, we know $\chi_q(k)=1$. Thus, we evaluate the character sum as follows:
   \[
       \sum_{y \in C_n} \chi_q(y) = \sum_{x \in C_m} \chi_q(kx) = \chi_q(k) \sum_{x \in C_m} \chi_q(x) =  \sum_{x \in C_m} \chi_q(x).
   \]
\end{proof}

Our last step before presenting the main construction is to evaluate the quadratic character sums over cosets of $\mathbb{F}_p$.
\begin{lem}\label{qrlem}
Let $p$ be an odd prime power and $q=p^2$. For $t\in\mathbb{F}_{q}$ define
\[
S(t)=\sum_{x\in t+\mathbb{F}_{p}}\chi_q(x).
\]
Then
\[
S(t)=
\begin{cases}
p-1, & t\in\mathbb{F}_{p},\\[2pt]
-1,  & t\notin\mathbb{F}_{p}.
\end{cases}
\]
\end{lem}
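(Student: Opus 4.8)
The plan is to separate the statement into two cases according to whether $t+\mathbb{F}_p$ is the subfield itself or one of the other additive cosets, and to reduce everything to one structural fact: every nonzero element of $\mathbb{F}_p$ is a square in $\mathbb{F}_q$, so that $\chi$ is identically $1$ on $\mathbb{F}_p^*$. This is what forces the distinguished coset $C_0=\mathbb{F}_p$ to have the large sum $p-1$, and once that value is known the remaining cosets are pinned down by the two displayed properties.

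First I would establish the structural fact. For $a\in\mathbb{F}_p^*$ we have $a^{p-1}=1$, and since the nonzero squares of $\mathbb{F}_q$ are precisely the elements annihilated by the exponent $(q-1)/2$, it suffices to note
\[
a^{(q-1)/2}=a^{(p-1)(p+1)/2}=\bigl(a^{p-1}\bigr)^{(p+1)/2}=1,
\]
where $(p+1)/2$ is an integer because $p$ is odd. Hence $\chi(a)=1$ for all $a\in\mathbb{F}_p^*$. The case $t\in\mathbb{F}_p$ is then immediate, since $t+\mathbb{F}_p=\mathbb{F}_p$ gives
\[
S(t)=\chi(0)+\sum_{a\in\mathbb{F}_p^*}\chi(a)=0+(p-1)=p-1,
\]
and in particular the sum over $C_0$ equals $p-1$.

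For $t\notin\mathbb{F}_p$, the coset $t+\mathbb{F}_p$ avoids $0$ and is one of $C_1,\dots,C_{p-1}$. By the translation-invariance property (ii) these $p-1$ coset sums all equal a common value $S$, and property (i) gives
\[
0=\sum_{x\in\mathbb{F}_q}\chi(x)=\sum_{x\in C_0}\chi(x)+(p-1)S=(p-1)+(p-1)S,
\]
whence $S=-1$, as claimed. If one instead wants a self-contained derivation not invoking (ii), I would prove the equality of the off-$C_0$ coset sums directly: for $c\in\mathbb{F}_p^*$ multiplication by $c$ fixes $\mathbb{F}_p$ and sends $t+\mathbb{F}_p$ to $ct+\mathbb{F}_p$, while $\chi(cx)=\chi(c)\chi(x)=\chi(x)$ by the structural fact, so $S(ct)=S(t)$; since $\mathbb{F}_q/\mathbb{F}_p$ is a one-dimensional $\mathbb{F}_p$-space, $\mathbb{F}_p^*$ permutes the $p-1$ nonzero cosets transitively, forcing a single common value. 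I expect this transitivity/equidistribution step (or, if (ii) is granted, the square-counting observation underlying it) to be the only genuinely substantive point; everything else is bookkeeping.
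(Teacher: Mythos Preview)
Your proof is correct and follows essentially the same route as the paper: both compute $S(t)=p-1$ on $\mathbb{F}_p$ from the fact that $\mathbb{F}_p^\ast$ consists of squares in $\mathbb{F}_q$, and then use properties (i) and (ii) to force each of the remaining $p-1$ coset sums to equal $-1$. Your version is slightly more self-contained, since you actually verify $a^{(q-1)/2}=1$ for $a\in\mathbb{F}_p^\ast$ and supply a direct transitivity argument for (ii), whereas the paper simply asserts these facts.
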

\begin{proof}
 Since every nonzero element of $\mathbb{F}_p$ is a square in $\mathbb{F}_q$, for $t \in \mathbb{F}_p$
\[
S(t)=\sum_{x\in t+\mathbb{F}_p}\chi_q(x)=\sum_{x\in \mathbb{F}_p}\chi_q(x)
=p-1.
\]Next, suppose $t \in \mathbb{F}_q \setminus \mathbb{F}_p$. Since $\{C_c \mid c \in \mathbb{F}_p\}$ partition $\mathbb{F}_q$ we have
\[
\sum_{x\in\mathbb{F}_q}\chi_q(x)
=\sum_{x\in\mathbb{F}_p}\chi_q(x)+\sum_{x\in\mathbb{F}_q \setminus \mathbb{F}_p}\chi_q(x)=0.
\]By Lemma \ref{lem:qcsumsconst} character sums over $\{C_c \mid c \in \mathbb{F}_p\setminus 0\}$ are constant. Thus\[p-1=\sum_{x\in\mathbb{F}_p}\chi_q(x)=-\sum_{x\in\mathbb{F}_q \setminus \mathbb{F}_p}\chi_q(x) =-(p-1) \sum_{x \in C_1} \chi_q(x).\]Therefore \[\sum_{x\in t + \mathbb{F}_p} \chi_q(x)=\sum_{x \in C_1} \chi_q(x)=-1,\]which implies $S(t)=-1$.
\end{proof}

The main construction relies on manipulating the rows and columns of the standard Paley matrix, defined below.
\begin{thm}[\cite{paley1933}]\label{paley}
    Let $p$ be an odd prime power. Then there is a symmetric conference matrix \[C=(C_{x,y})_{x,y \in \{\infty\} \cup \mathbb{F}_q}\]of order $n=q+1$, defined by \begin{align*}
 C_{x,x} = 0&, \quad \forall\,x;\\
C_{\infty,x} = C_{x,\infty} = 1&,\quad \forall\,x\in\mathbb{F}_q;\\
C_{x,y} = \chi_q(x-y)&, \quad  x\neq y,\;x,y\in\mathbb{F}_q.
\end{align*}
\end{thm}

\begin{thm}\label{main}
    Let $p$ be an odd prime power. Then there exists a skew-regular quaternary Hadamard matrix of order $p^2+1$ with row sum $1-pi$.
\end{thm}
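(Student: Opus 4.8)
The plan is to build $H$ explicitly from the quadratic character on $\mathbb{F}_{p^2}$, using the subfield $\mathbb{F}_p$ to force the row sums to be constant. Since $p^2+1=|p+i|^2$, regularity with row sum $p+i$ is equivalent to the two conditions $HH^*=(p^2+1)I$ and $H\mathbf{1}=(p+i)\mathbf{1}$, where $\mathbf 1$ is the all-ones vector. Writing $H=A+iB$ with $A,B$ real matrices of disjoint $\{0,\pm1\}$ support satisfying $A\circ A+B\circ B=J$, this amounts to requiring that every row of $A$ sums to $p$, every row of $B$ sums to $1$, together with $AA^t+BB^t=(p^2+1)I$ and $AB^t=BA^t$. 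So I would aim to place the real $\pm1$ entries and the imaginary $\pm i$ entries in two complementary patterns dictated by the coset decomposition $\mathbb{F}_{p^2}=\bigsqcup_{\alpha}C_\alpha$.

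Concretely, I would first index a $p^2\times p^2$ core by $\mathbb{F}_{p^2}$ and set the entry in position $(x,y)$ to a real value $\pm\chi(x-y)$ when $x-y\notin\mathbb{F}_p$ (so $x,y$ lie in different cosets), and to an imaginary value $\pm i$ when $x-y\in\mathbb{F}_p$ (same coset, including the diagonal). Within each coset block the signs of the $\pm i$ entries would be assigned by a subfield rule, e.g.\ by the quadratic character of $\mathbb{F}_p$ — which is the only source of genuine sign variation inside a coset, since every nonzero element of $\mathbb{F}_p$ is a square in $\mathbb{F}_{p^2}$ — normalized so that each within-coset row sums to a single $i$. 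Then I would attach a border row and column to pass from order $p^2$ to order $p^2+1$ and to absorb the all-ones defect of the core.

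The row-sum computation is exactly where Lemma~\ref{qrlem} does the work. For a fixed row $x$, the cross-coset real entries sum to $\sum_{x-y\notin\mathbb{F}_p}\chi(x-y)=\bigl(\sum_{y\in\mathbb{F}_{p^2}}\chi(x-y)\bigr)-\sum_{x-y\in\mathbb{F}_p}\chi(x-y)=0-(p-1)=-(p-1)$; choosing the cross-coset sign to be $-\chi$ turns this into $p-1$, and the single border entry supplies the last $+1$, so the real part of every row sum is $p$. The within-coset entries contribute exactly $i$ by the normalization above, giving imaginary part $1$, and hence every row sum equals $p+i$. I would then verify $HH^*=(p^2+1)I$ by splitting the Hermitian inner product of two distinct rows into within-coset and cross-coset parts: the cross-coset part reduces to the standard two-character identity $\sum_{y}\chi(x-y)\chi(x'-y)=-1$ for $x\neq x'$, refined by coset membership through the values $p-1$ and $-1$ of Lemma~\ref{qrlem}, while the within-coset block and the border are arranged to cancel the residual; the diagonal entries of $HH^*$ evaluate to $p^2+1$ directly from $A\circ A+B\circ B=J$.

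The main obstacle, I expect, is the orthogonality step rather than the row sums. Two things must happen at once: the mixed real–imaginary contribution $AB^t-BA^t$ must vanish, which forces a symmetry (or controlled skewness) constraint on the joint choice of within-coset signs and border, and the real character sums coming from distinct cosets must recombine to exactly the constant needed to annihilate every off-diagonal entry of $HH^*$. Balancing these two demands against the regularity requirement is delicate, and this is precisely where properties (i) and (ii) of $\chi$ and the coset-sum values from Lemma~\ref{qrlem} are indispensable. Once the within-coset pattern and the border are pinned down so that the mixed terms cancel and the refined character sums telescope to a scalar matrix, the theorem follows.
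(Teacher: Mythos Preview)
Your proposal has a genuine gap: you correctly identify orthogonality as ``the main obstacle'' but never establish it. The final paragraph is a plan (``once the within-coset pattern and the border are pinned down\ldots''), not a verification, and the two simultaneous constraints you list --- cancellation of $AB^t-BA^t$ and the correct telescoping of the cross-coset character sums --- are never shown to be satisfiable together with your row-sum normalization. In particular, your within-coset suggestion (signs governed by the quadratic character of $\mathbb{F}_p$) introduces genuine sign variation inside each $p\times p$ block, and you give no mechanism for cancelling the extra terms this creates in the inner product of two rows lying in the same coset, nor for how the border absorbs the residual of the mixed $AB^t$ term.

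The paper sidesteps this difficulty entirely by a much simpler device. It starts from the Paley conference matrix $C$ of order $p^2+1$ and forms $H=I-iC$, which is already a quaternary Hadamard matrix because $q=p^2\equiv1\pmod4$ makes $C$ symmetric and $C^2=qI$, so $HH^*=(I-iC)(I+iC)=(q+1)I$. It then sets $S=MHM^*$ for a diagonal unitary $M=\mathrm{diag}(v)$ with $v_\alpha\in\{1,-i,+i\}$ depending only on whether $\alpha$ lies in $\mathbb{F}_p$, in the first half of the nontrivial $\mathbb{F}_p$-cosets, or in the second half. Because conjugation by a diagonal unitary preserves $SS^*=(p^2+1)I$ automatically, \emph{only} the row sums need to be checked, and that is a short four-case computation driven by Lemma~\ref{qrlem}. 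Skewness also comes for free, since $S=I-iMCM^*$ with $MCM^*$ Hermitian. Your outline treats orthogonality and regularity as two independent constraints to be balanced; the key missing idea is to start inside the Hadamard equivalence class of $I-iC$ and move only within it, so that orthogonality is never in question.
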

\begin{proof}
    Let $q=p^2$ and $C$ be the symmetric conference matrix of order $n=q+1$ defined in Theorem \ref{paley}. Since $C$ is symmetric, $H = I_n-iC$ forms a skew-type quaternary Hadamard matrix of order $n$. Index the first row and column of $H$ by the symbol $\infty$, and the remaining rows and columns by elements of $\mathbb{F}_q$ as in the construction of $C$. 

Next, we partition $\mathbb{F}_p\setminus \{0\}$ into two sets $P$ and $-P$. Let $\omega$ be a primitive element of $\mathbb{F}_p$. Define \[P = \left\{\omega^i \mid 0 \leq i < \frac{p-1}{2} \right\}, \text{ and } -P = \left\{-\omega^i \mid 0 \leq i < \frac{p-1}{2} \right\}.\] Then, using the additive cosets $\{C_c \mid c \in \mathbb{F}_p\}$, we partition the index set $\{\infty\}\cup\mathbb{F}_q$ into four parts:
\[
\{\infty\},\quad \mathbb{F}_p= C_0, \quad \mathcal{H}_1=\bigcup_{c \in P}C_c, \quad \mathcal{H}_2=\bigcup_{c \in -P}C_c.
\]For the sake of cleaner notation, let \[\mathcal{C}=\bigcup_{c \in \mathbb{F}_p \setminus \{0\}} C_c.\]
Next, define the vector $v=(v_\alpha)$ by
\[
v_\infty = 1,
\qquad
v_x = 
\begin{cases}
1, & \text{if }x\in\mathbb{F}_p,\\
-i, & \text{if }x\in \mathcal{H}_1, \text{ and}\\
+i, & \text{if }x\in \mathcal{H}_2.
\end{cases}
\]
Finally, define
\[
M =\mathrm{diag}(v), \text{ and }
S = MHM^*.
\]
We will show that $S$ is the desired skew-regular quaternary Hadamard matrix. 

First, $S$ is skew-type since $H$ is skew-type. Next, we must show $S$ is regular. First, notice that the $(\alpha,\beta)$ entry of $S$ is given by
\[
S_{\alpha,\beta}
= v_\alpha H_{\alpha,\beta}\overline{v_\beta}.
\]Then, define \[
 R_\alpha =\sum_{\beta\in \mathbb{F}_q\cup \{\infty\}}H_{\alpha,\beta}\overline{v_\beta}.
\]Thus, the row sum of the row indexed by $\alpha$ is given by
\[
\sum_{\beta\in \mathbb{F}_q\cup \{\infty\}}S_{\alpha,\beta}
= \sum_{\beta\in \mathbb{F}_q\cup \{\infty\}}v_\alpha H_{\alpha,\beta}\overline{v_\beta} = v_\alpha R_\alpha.\]
It suffices to show that for every $\alpha\in \mathbb{F}_q\cup \{\infty\}$, $v_\alpha R_\alpha=1-pi$. We consider four cases for the row–index $\alpha$.

\noindent\textbf{Case 1: $\alpha=\infty$}. By definition
\[
H_{\infty,\infty}=1,
\qquad
H_{\infty,x}=-i\;\;(\forall\,x\in\mathbb{F}_q).
\]
Hence
\[
v_\alpha R_\alpha
= 1\overline{v_\infty}
+\sum_{x\in\mathbb{F}_q}(-i)\;\overline{v_x}
= 1
+ \Bigl[p\cdot (-i)
+\tfrac{q-p}2\cdot(-1)
+\tfrac{q-p}2\cdot(1)\Bigr]
= 1-pi.
\]

\noindent\textbf{Case 2: $\alpha\in\mathbb{F}_p$}.
Split the sum $R_\alpha=\sum_{\beta}H_{\alpha,\beta}\,\overline{v_\beta}$ into four parts:
\begin{enumerate}
  \item \(\beta=\infty\): We have \[v_\alpha H_{\alpha,\infty}\overline{v_\infty}  = -i. \]
  \item \(\beta=\alpha\): We have \[v_\alpha H_{\alpha,\alpha}\overline{v_\alpha} = 1. \]
  \item \(\beta\in\mathbb{F}_p\setminus\{\alpha\}\): since $\alpha,\beta \in \mathbb{F}_p$, we have $\chi_q(\alpha-\beta)=1$, which gives \[v_\alpha \cdot \sum_{\beta\in\mathbb{F}_p\setminus\{\alpha\}}H_{\alpha,\beta} = 1 \cdot \sum_{\beta\in\mathbb{F}_p\setminus\{\alpha\}}\chi_q(\alpha-\beta) \cdot (-i) =-i(p-1). \]
  \item \(\beta\in\mathbb{F}_q\setminus\mathbb{F}_p\): Using Lemma~\ref{qrlem} \begin{align*}
  v_\alpha \cdot \sum_{\beta \in\mathbb{F}_q\setminus\mathbb{F}_p}(-i)\cdot\chi_q(\alpha - \beta)\overline{v_\beta} 
  &=-i\cdot \sum_{\beta \in \mathcal{C}} \chi_q(\alpha - \beta)\overline{v_\beta} \\ 
  &= -i\cdot\sum_{\beta \in \mathcal{H}_1} \chi_q(\alpha - \beta)\cdot i \; -i\cdot  \sum_{\beta \in\mathcal{H}_2} \chi_q(\alpha - \beta)\cdot(-i) \\ 
  &= -\frac{p-1}{2}  + \frac{p-1}{2} \\ 
  &= 0.\end{align*}
\end{enumerate}
Adding the four parts gives
\[
v_\alpha R_\alpha
=-i + 1 - i(p-1) + 0
= 1-pi.
\]

\noindent\textbf{Case 3: $\alpha\in \mathcal{H}_1$}
We split $R_\alpha$ into five parts. Assume $\alpha \in C_k$ for $k \in P$.
\begin{enumerate}
  \item \(\beta=\infty\): We have \[v_\alpha H_{\alpha,\infty}\overline{v_\infty} = -i \cdot -i \cdot 1 = -1. \]
  \item \(\beta=\alpha\): We have \[v_\alpha H_{\alpha,\alpha}\overline{v_\alpha} = -i \cdot 1 \cdot i = 1. \]
  \item \(\beta\in\mathbb{F}_p\): Since $\alpha \notin \mathbb{F}_p$, we apply Lemma~\ref{qrlem} to obtain \[v_\alpha \sum_{\beta\in\mathbb{F}_p}-i\cdot\chi_q(\alpha-\beta) \overline{v_\beta} = -1 \cdot \sum_{\beta \in \mathbb{F}_p}\chi_q(\alpha-\beta)\cdot 1 = -1 \cdot (-1) = 1.\]
  \item \(\beta\in C_k\): In this case, $\alpha$ and $\beta$ are in the same coset, so $\alpha-\beta \in \mathbb{F}_p$. Hence \[v_\alpha \sum_{\beta\in C_k \setminus\{\alpha\}}-i\cdot\chi_q(\alpha-\beta)\overline{v_\beta} = -1\cdot \sum_{\beta\in C_k\setminus\{\alpha\}}\chi_q(\alpha-\beta)\cdot i =  -i\cdot\sum_{x \in \mathbb{F}_p \setminus \{0\}}\chi_q(x)= -i\cdot(p-1).\]
    \item \(\beta\in \mathbb{F}_q \setminus (\mathbb{F}_p\cup C_k)\): We have \begin{align*} v_\alpha \sum_{\beta\in \mathbb{F}_q \setminus (\mathbb{F}_p\cup C_k)}-i\cdot \chi_q(\alpha-\beta)\overline{v_\beta} 
    &= -\!\!\!\! \sum_{\beta \in \mathcal{H}_1 \setminus C_k}\big[\chi_q(\alpha-\beta)\cdot i\big]  - \sum_{\beta \in \mathcal{H}_2}\big[\chi_q(\alpha-\beta)\cdot (-i)\big] \\ 
        &= -i\cdot\bigg[-\frac{p-1}{2}+1\bigg]+i\cdot\bigg[-\frac{p-1}{2}\bigg] \\ 
        &= -i.\end{align*}
\end{enumerate}

Adding everything,
\[
v_\alpha R_\alpha
=-1 +1+ 1 -i\cdot (p - 1) - i = 1-pi.
\]

\noindent\textbf{Case 4: $\alpha\in \mathcal{H}_2$}
We split $R_\alpha$ into five parts as above. Parts (1-4) contribute $1$, $1$, $-1$ and $-i\cdot(p-1)$, respectively. The final part also follows closely. Assume $\alpha \in C_k$ for some $k \in -P.$
\begin{enumerate}  \setcounter{enumi}{4}
    \item \(\beta\in \mathbb{F}_q \setminus (\mathbb{F}_p\cup C_k)\): We have \begin{align*} v_\alpha \sum_{\beta\in \mathbb{F}_q \setminus (\mathbb{F}_p\cup C_k)}-i\cdot\chi_q(\alpha-\beta)\overline{v_\beta} 
        &= i \cdot \sum_{\beta \in \mathcal{H}_1 }\chi_q(\alpha-\beta) \; +i\cdot \sum_{\beta \in \mathcal{H}_2 \setminus C_k}\big[\chi_q(\alpha-\beta)\cdot (-1)\big] \\ 
        &= i\cdot\bigg[-\frac{p-1}{2}\bigg]-i\cdot\bigg[-\frac{p-1}{2}+1\bigg] \\ 
        &= -i.\end{align*}
\end{enumerate}

Adding everything,
\[
v_\alpha R_\alpha
=1 + 1 - 1 -i\cdot(p - 1) -i = 1 -pi.
\]
Therefore, for any $\alpha \in \mathbb{F}_q\cup \{\infty\}$ the row sum $v_{\alpha}R_\alpha=1-pi$. Hence, $S$ is skew-regular.
\end{proof}

An example of the construction is provided in Appendix \ref{app}.

\begin{lem}\label{plus}
Let $H$ be a quaternary Hadamard matrix with row sum $a+ib$. Then there is a quaternary Hadamard matrix with row sum $a-ib$.
\end{lem}
\begin{proof}
Write $H=A+iB$, where $A$ and $B$ are $(0,1,-1)$-matrices. It can be seen that the quaternary Hadamard matrix $K=A-iB$ has row sum $a-ib$.
\end{proof}

\begin{cor}\label{1picor}
There is a skew-regular quaternary Hadamard matrix of order $1+q^2$ with row sum $1+qi$ for each odd prime power $q$.
\end{cor}
\begin{proof}
Note that if the matrix $H$ in Lemma \ref{plus} is skew-type, then the matrix $K$ is skew-type too. The result follows from Theorem \ref{main} and Lemma \ref{plus}.
\end{proof}

\section{Some applications}

Skew-type (quaternary) Hadamard matrices provide a robust framework for the construction of larger matrix classes \cite{SY}. A significant advantage of skew-regular (quaternary) Hadamard matrices is that the regularity property is preserved under these constructions, yielding extensive families of regular quaternary Hadamard matrices. In this section, we investigate several applications of skew-regular quaternary Hadamard matrices, paralleling established results in the Hadamard case \cite{skew-r}.
While the new results presented here are novel, part of the methodologies employed are standard. Accordingly, our presentation is concise, with a primary focus on highlighting new findings.

\subsection{Quaternary orthogonal designs}
There are several methods for generalizing quaternary Hadamard matrices. For this paper, we focus on quaternary orthogonal designs. 
\begin{definition}
Let $x_1,x_2,\dots,x_k$ be real indeterminates. Then a \emph{quaternary orthogonal design} is a square matrix $D$ of order $n$, with entries from the set $\{0,\pm \epsilon_1x_1,\dots,\pm \epsilon_kx_k\}$ such that $DD^T = \sigma I_n$, where $\sigma = \sum_{\ell=1}^k s_\ell x_\ell^2$ and $\epsilon_i \in \{1, i\}$. We say that $D$ is a COD$(n;s_1,s_2,\dots,s_k)$ of type $(s_1,s_2,\dots,s_k)$ in variables $x_1,x_2,\dots,x_k$. 
\end{definition} 
By replacing each indeterminate with unity, one obtains a quaternary weighing matrix of weight $\sum_{\ell=1}^k s_\ell$; and if $\sum_{\ell=1}^k s_\ell = n$, then one has a quaternary Hadamard matrix, \cite{SY}. 

\begin{lem}\label{recur1}
There is a COD$(1+p^2;1,p^2)$ for each odd prime power $p$.
\end{lem}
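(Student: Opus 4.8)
The plan is to build the design directly from the skew-regular matrix $S$ produced in Theorem~\ref{main}, rather than from scratch. Writing $S = I + K$ with $K := S - I$, I would first record the structural facts about $K$ that drive everything. Since $S = MHM^* = I - iMCM^*$, we have $K = -iMCM^*$; as $C$ has zero diagonal and off-diagonal entries in $\{\pm 1\}$ and $M$ is diagonal with unimodular entries, $K$ has zero diagonal and off-diagonal entries in $\{\pm 1, \pm i\}$, i.e.\ $K$ is quaternary away from the diagonal. Moreover the skewness of $S$ established at the end of that proof says precisely that $K$ is skew-Hermitian, $K^* = -K$ (equivalently $S^* = I - K$). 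With these facts in hand I introduce real indeterminates $x_1, x_2$ and set $D = x_1 I + x_2 K$.

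The heart of the argument is a single clean identity, $K^2 = -p^2 I$. I would derive it from the Hadamard property of $S$: since $S S^* = nI$ with $n = p^2 + 1$, and $S^* = I - K$, we get
\[
nI = (I+K)(I-K) = I - K^2, \qquad\text{so}\qquad K^2 = (1-n)I = -p^2 I.
\]
(Equivalently, $K^2 = (-i)^2\,MCM^*MCM^* = -MC^2M^* = -p^2 I$, using $M^*M = I$ and $C^2 = p^2 I$.) Treating $x_1, x_2$ as real, so that $D^* = x_1 I + x_2 K^* = x_1 I - x_2 K$, the orthogonality is then immediate:
\[
D D^* = (x_1 I + x_2 K)(x_1 I - x_2 K) = x_1^2 I - x_2^2 K^2 = (x_1^2 + p^2 x_2^2)\,I = \sigma I.
\]

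It remains to read off the type. Each row of $D$ has exactly one diagonal entry equal to $x_1$ and $p^2$ off-diagonal entries of the form $\pm x_2$ or $\pm i x_2$; hence the entries lie in $\{x_1, \pm x_2, \pm i x_2\}$, there is $s_1 = 1$ occurrence of the $x_1$-variable and $s_2 = p^2$ occurrences of the $x_2$-variable per row, and $s_1 x_1^2 + s_2 x_2^2 = x_1^2 + p^2 x_2^2 = \sigma$ matches the display above. This exhibits $D$ as a COD$(1+p^2;1,p^2)$, and setting $x_1 = x_2 = 1$ recovers $S$ itself.

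I expect the only genuine obstacle to be the bookkeeping needed to certify the structural claims about $K$, namely that it is quaternary with zero diagonal and that $K^* = -K$; once these are pinned down, the identity $K^2 = -p^2 I$ and the orthogonality computation are routine. A secondary point worth stating carefully is the transpose convention: for quaternary matrices the relevant orthogonality is with respect to the conjugate transpose, so that substituting $x_1 = x_2 = 1$ returns $S S^* = (p^2+1)I$, and the computation above uses $D^*$ accordingly.
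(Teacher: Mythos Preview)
Your proposal is correct and follows essentially the same route as the paper: take the skew decomposition $S=I+Q$ from Theorem~\ref{main} and form $x_1 I + x_2 Q$, using $Q^*=-Q$ and $Q^2=-p^2 I$ (which you derive explicitly from $SS^*=(p^2+1)I$) to verify orthogonality. The paper's proof is the one-line version of yours; the only extra content there is the remark that the resulting design inherits the constant row sum $x_1 - p x_2 i$ from the regularity of $S$, which is used later in Lemma~\ref{regular1}.
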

\begin{proof}
Let $H=I+Q$ be the skew-regular quaternary Hadamard matrix of order $1+p^2$ constructed in Theorem \ref{main}. Then $aI+bQ$ is a COD$(1+p^2;1,p^2)$ with the constant row sum $a-pbi$.
\end{proof}

\begin{thm}\label{main3}
There is a COD$(2+2p^2;1,1,p^2,p^2)$ for each odd prime power $p$.
\end{thm}
\begin{proof}
Let $H=I+Q$ be the skew-regular quaternary Hadamard matrix of order $1+p^2$ constructed in Theorem \ref{main}. Then \[ I\otimes A+Q\otimes B, \] where, \[ A=\begin{bmatrix}a & -b\\b & a \end{bmatrix}, B=\begin{bmatrix}c & d\\d &-c \end{bmatrix}.\]
\end{proof}

\begin{definition} Let $H$ be a skew-type quaternary Hadamard matrix. Then $H$ can be written as $$H=\left(\begin{array}{c|c} 1& e\\\hline -e^T& I+Q\end{array}\right),$$ where $Q^*=-Q$. The matrix $I+Q$ is called the \emph{skew-core} of $H$.
\end{definition}
Note that the row and column sums of the matrix $Q$ are zero, and the inner product of any two distinct rows of $I+Q$ is minus one.
\subsection{A recursive construction}
\begin{thm}\label{recur2}
There is a $$COD\left((1+p^2)p^{2k};p^{2k},p^{2k+2}\right)$$ for each odd prime power $p$ and non-negative integer $k$.
\end{thm}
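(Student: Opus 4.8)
The plan is to induct on $k$, taking Lemma \ref{recur1} (the case $k=0$) as the base, and to realise every $D_k$ in the block form
\[
D_k \;=\; I_{1+p^2}\otimes P_k \;+\; Q\otimes R_k ,
\]
where $Q$ is the order-$(1+p^2)$ skew part of the matrix $H=I+Q$ of Theorem \ref{main} (so $Q^{*}=-Q$, hence $QQ^{*}=-Q^{2}=p^{2}I_{1+p^2}$, and $Q$ has zero diagonal because the diagonal of $H$ is $1$), and $(P_k,R_k)$ is a pair of matrices of order $p^{2k}$ with fourth-root-of-unity-or-zero coefficients carrying the two indeterminates $x,y$. Since $Q$ has zero diagonal, the diagonal blocks of $D_k$ see only $P_k$ and the off-diagonal blocks see only $Q_{\alpha\beta}R_k$; the supports of the two summands are therefore disjoint and every entry of $D_k$ is a single indeterminate times a unit.

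The invariant I would propagate is that $(P_k,R_k)$ is an \emph{amicable pair}, meaning
\[
P_kR_k^{*}=R_kP_k^{*}
\qquad\text{and}\qquad
P_kP_k^{*}+p^{2}R_kR_k^{*}=\bigl(p^{2k}x^{2}+p^{2k+2}y^{2}\bigr)I_{p^{2k}} .
\]
Expanding $D_kD_k^{*}$ blockwise and using $Q^{*}=-Q$ and $QQ^{*}=p^{2}I$ collapses the off-diagonal term $Q\otimes\bigl(R_kP_k^{*}-P_kR_k^{*}\bigr)$ to zero by amicability and turns the diagonal term into $I_{1+p^2}\otimes\bigl(P_kP_k^{*}+p^{2}R_kR_k^{*}\bigr)$; the invariant then yields $D_kD_k^{*}=\bigl(p^{2k}x^{2}+p^{2k+2}y^{2}\bigr)I$, so $D_k$ is a $\mathrm{COD}\bigl((1+p^2)p^{2k};p^{2k},p^{2k+2}\bigr)$ once the disjoint-support bookkeeping is recorded. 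The base pair is $(P_0,R_0)=(x,y)$, for which $D_0=xI_{1+p^2}+yQ$ is precisely the design of Lemma \ref{recur1}.

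For the inductive step I would build the order-$p^{2k+2}$ pair from the order-$p^{2k}$ pair through the skew-core. Writing the skew-core of $H$ as $I_{p^2}+Q_c$ (order $p^{2}$), the observation following the skew-core definition gives $Q_c^{*}=-Q_c$, zero row and column sums (so $Q_c\mathbf{1}=0=\mathbf{1}^{T}Q_c$, whence $Q_cJ=JQ_c=0$ for the all-ones matrix $J$) and, from the inner-product-$(-1)$ property, $Q_cQ_c^{*}=p^{2}I_{p^2}-J$. I then set
\[
P_{k+1}=R_k\otimes J_{p^2},
\qquad
R_{k+1}=P_k\otimes I_{p^2}+R_k\otimes Q_c .
\]
A direct computation shows the invariant is preserved: the two cross terms in $R_{k+1}R_{k+1}^{*}$ cancel because $Q_c^{*}=-Q_c$ together with $P_kR_k^{*}=R_kP_k^{*}$, so that $R_{k+1}R_{k+1}^{*}=P_kP_k^{*}\otimes I_{p^2}+R_kR_k^{*}\otimes(p^{2}I_{p^2}-J)$; hence in the combination $P_{k+1}P_{k+1}^{*}+p^{2}R_{k+1}R_{k+1}^{*}$ the rank-one defect $-p^{2}R_kR_k^{*}\otimes J$ is cancelled exactly by $P_{k+1}P_{k+1}^{*}=p^{2}R_kR_k^{*}\otimes J$, and what survives reassembles as $p^{2}\bigl(P_kP_k^{*}+p^{2}R_kR_k^{*}\bigr)\otimes I_{p^2}=\bigl(p^{2k+2}x^{2}+p^{2k+4}y^{2}\bigr)I_{p^{2k+2}}$. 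Amicability persists because $JQ_c=Q_cJ=0$ forces $P_{k+1}R_{k+1}^{*}=R_{k+1}P_{k+1}^{*}=(R_kP_k^{*})\otimes J$. Finally, since $I_{p^2}$ and $Q_c$ have disjoint (diagonal versus off-diagonal) supports, the two summands of $R_{k+1}$ never overlap, so all entries stay single indeterminates times units, and reading off the diagonal of $D_{k+1}D_{k+1}^{*}$ pins the type to $(p^{2k+2},p^{2k+4})$.

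The step I expect to be the main obstacle is exactly this cancellation of the rank-one defect $J$. One cannot simply tensor $D_0$ with a quaternary Hadamard matrix of order $p^{2k}$, because a reduction modulo $1-i$ in $\mathbb{Z}[i]$ shows that no quaternary Hadamard matrix of odd order exceeding $1$ exists; equivalently, the skew-core itself is not orthogonal, satisfying only $Q_cQ_c^{*}=p^{2}I-J$. The construction therefore hinges on arranging the $J$-defects so that they annihilate, which is what the choice $P_{k+1}=R_k\otimes J$ achieves while simultaneously respecting amicability and the single-indeterminate structure. Once that balance is identified, every remaining identity is a routine block expansion using $Q^{2}=-p^{2}I$, $Q_cQ_c^{*}=p^{2}I-J$ and $Q_c\mathbf{1}=0$.
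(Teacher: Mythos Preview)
Your argument is correct and is essentially the paper's construction made explicit: the paper's one--line proof prescribes the recursive substitution $a\mapsto bJ_{p^2}$, $b\mapsto aI_{p^2}+bQ_c$ (with $Q_c$ the skew--core) and then asserts ``the result follows'', while you encode that same substitution as the update $(P_k,R_k)\mapsto(R_k\otimes J_{p^2},\,P_k\otimes I_{p^2}+R_k\otimes Q_c)$ inside the fixed frame $D_k=I_{1+p^2}\otimes P_k+Q\otimes R_k$ and actually carry out the verification the paper omits. Up to a harmless permutation of tensor factors this produces the same sequence of designs, and your amicability/Gram invariant $P_kR_k^{*}=R_kP_k^{*}$, $P_kP_k^{*}+p^{2}R_kR_k^{*}=(p^{2k}x^{2}+p^{2k+2}y^{2})I$ is precisely the hidden mechanism that makes the paper's substitution work, using $Q_cQ_c^{*}=p^{2}I-J$ and $Q_cJ=JQ_c=0$ exactly as you do.
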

\begin{proof}
Let $I+Q$ be the skew-core of the quaternary Hadamard matrix in Theorem \ref{main}. Assuming that the COD$(1+p^2;1,p^2)$ constructed in Lemma \ref{recur1} is in variables $a,b$, with $a$ repeated one time. Recursively changing $b$ to $aI+bQ$ and $a$ to $bJ_{p^2}$ the result follows, see also \cite{kbtz}.
\end{proof}
\begin{rem}
Note that the regularity of the matrix is not used in the proof of Theorem \ref{recur2}, and the quaternary Hadamard matrices obtained from the orthogonal designs constructed in Theorem \ref{recur2} are not necessarily of skew-type.  \end{rem}

\begin{lem}\label{regular1}
For every odd prime power $p$, there is a regular quaternary Hadamard matrix of order $p^{2k}(1+p^2)$ with row sum $p^{2k}-p(p^{2k}i)$ for each odd prime $p$ and non-negative integer $k$.
\end{lem}
\begin{proof}
Applying Theorem \ref{recur2} and tracing the changes in row sums as $k$ increases in the recursive process, for $a=b=1$, is as follows:
\begin{enumerate}
    \item[(i)] For $k=0$, the order is $1+p^2$ and the row sum is $1-pi$,
\item[(ii)] for $k=1$, the order is $p^2(1+p^2)$ and the row sum is $p^2-pi$, and
\item[(iii)] for $k=2$, the order is $p^4(1+p^2)$ and the row sum is $p^2 - p^3i$.
\end{enumerate}
Inductively,
\begin{enumerate}
 \item[(iv)] for even $k\geq 0$, the order is $p^{2k}(1+p^2)$ and the row sum is $p^{k}-(p^{k+1})i$, and
  \item[(v)] for odd $k\geq 0$, the order is $p^{2k}(1+p^2)$ and the row sum is $p^{k+1}-(p^{k})i$.
\end{enumerate}
\end{proof}

\subsection{Skew biregular quaternary Hadamard matrices}

A \textit{biregular} quaternary Hadamard matrix takes two values for its row sums.  The order of a skew-type quaternary Hadamard matrix can be doubled.
\begin{lem}\label{doubling}
Let $H$ be a skew-regular quaternary Hadamard matrix of order $n=a^2+b^2$ with the row sum $a+bi$. Then the row sums of the skew-type matrix \[\begin{bmatrix} H & iH\\iH^* & H^* \end{bmatrix}\] belong to $\{a-b+(a+b)i, a+b+(a-b)i\}$, which is not regular in general.
 \end{lem}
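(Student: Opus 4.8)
The plan is to work blockwise and reduce every assertion to three facts about a skew-regular $H$: the Hadamard relation $HH^*=H^*H=nI_n$, the skew-type relation $H+H^*=2I_n$, and regularity, which I write as $H\mathbf{1}=(a+bi)\mathbf{1}$ where $\mathbf{1}$ is the all-ones vector of length $n$. First I would confirm that $K$ is a quaternary Hadamard matrix of order $2n$. Since
\[
K^*=\begin{bmatrix} H^* & -iH\\ -iH^* & H\end{bmatrix},
\]
a block multiplication gives diagonal blocks of $KK^*$ equal to $HH^*+H^*H=2nI_n$, while the off-diagonal blocks are $\mp iH^2\pm iH^2=0$; the cancellation comes purely from the opposite signs carried by $i$ and $-i$, so no information about $H^2$ is needed and $KK^*=2nI_{2n}$. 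For the skew-type property I would add $K$ and $K^*$: the off-diagonal blocks $iH-iH$ and $iH^*-iH^*$ vanish and each diagonal block is $H+H^*=2I_n$, whence $K+K^*=2I_{2n}$, i.e. $(K-I_{2n})^*=-(K-I_{2n})$, confirming that $K$ is of skew type.

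Next I would compute the row sums by applying $K$ to the all-ones vector of length $2n$, which splits as the top block contributing $(1+i)H\mathbf{1}=(1+i)(a+bi)\mathbf{1}$ and the bottom block contributing $(1+i)H^*\mathbf{1}$. The one point that needs care, and the main obstacle, is the value of $H^*\mathbf{1}$: even though $H$ has constant row sum $a+bi$, the row sums of $H^*$ equal the conjugated column sums of $H$, which are not a priori equal to $a+bi$. Here the skew-type relation rescues us, since $H^*=2I_n-H$ yields $H^*\mathbf{1}=(2-a-bi)\mathbf{1}$. Moreover, combining $HH^*=nI_n$ with $H^*=2I_n-H$ gives $H^2=2H-nI_n$, and applying both sides to $\mathbf{1}$ forces $(a+bi)^2-2(a+bi)+n=0$; writing $z=a+bi$ and $n=z\bar z$ this factors as $z(z+\bar z-2)=0$, so $a=1$. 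Consequently $H^*\mathbf{1}=(a-bi)\mathbf{1}$ is exactly the conjugate row sum, and substitution gives the top rows $(1+i)(a+bi)=a-b+(a+b)i$ and the bottom rows $(1+i)(a-bi)=a+b+(a-b)i$, the two asserted values.

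Finally I would observe that all $n$ top rows share the first value and all $n$ bottom rows share the second, so $K$ takes exactly two row sums; these coincide precisely when $a-b=a+b$, i.e. when $b=0$. Hence for $b\neq 0$ the two values are distinct and $K$ is semi-regular but not regular in general, which completes the proof.
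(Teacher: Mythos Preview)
Your proof is correct and complete; in fact the paper states this lemma without proof, so there is nothing to compare against. Your block computations for $KK^*=2nI_{2n}$ and $K+K^*=2I_{2n}$ are straightforward and correct, and your evaluation of the two block row sums via $K\begin{bmatrix}\mathbf{1}\\\mathbf{1}\end{bmatrix}$ is the natural approach.

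The genuinely valuable part of your argument is the observation that skew-regularity forces $a=1$. You correctly note that regularity of $H$ gives only $H\mathbf{1}=(a+bi)\mathbf{1}$, not $H^*\mathbf{1}=(a-bi)\mathbf{1}$; the skew relation $H^*=2I-H$ yields $H^*\mathbf{1}=(2-a-bi)\mathbf{1}$, and combining $HH^*=nI$ with $H^*=2I-H$ and applying to $\mathbf{1}$ gives $(a+bi)^2-2(a+bi)+n=0$, whence (for $n>0$) $a=1$. Without this step the bottom row sum would come out as $(2-a+b)+(2-a-b)i$, which matches the asserted value $a+b+(a-b)i$ only when $a=1$. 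The paper does not make this constraint explicit, presumably because all the skew-regular matrices it actually constructs have row sum $1-pi$; your argument both proves the lemma and exposes the hidden hypothesis.
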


\begin{definition}
 A \textit{skew-biregular} quaternary Hadamard matrix is a biregular skew-type quaternary Hadamard matrix.
 \end{definition}

  \begin{rem}
     For $p=7$, Theorem~\ref{main} produces a skew-regular quaternary Hadamard matrix of order 50, with the row sum $1-7i$. This particular matrix is also shown to be equivalent to a skew-biregular quaternary Hadamard matrix with row sums in $\{5+5i,-5-5i\}$ in Appendix~\ref{app2}.
 \end{rem} 
 
The following result follows directly from Theorem \ref{main} and Lemma \ref{doubling}.
\begin{cor} There is a skew-biregular quaternary Hadamard matrix of order $2+2p^2$ for each odd prime power $p$ with row sums in $\{1+p+(1-p)i,1-p+(1+p)i\}$.\end{cor}

\begin{lem} There is a regular quaternary Hadamard matrix of order $2+2p^2$ for each odd prime power $p$.\end{lem}
\begin{proof}
    Let $H$ be a skew-regular quaternary Hadamard matrix of order $1+p^2$ with the row sum $1-pi$ constructed using Theorem \ref{main}. Then the matrix \[ K=\begin{bmatrix} H & iH\\iH & H \end{bmatrix}\]is a regular quaternary Hadamard matrix of order $2+2p^2$ with row sums $\{1+p+(1-p)i\}$. Moreover, applying Corollary \ref{1picor} we can take $H$ to have row sums $1+pi$ to form a regular quaternary Hadamard matrix with row sums $\{1-p+(1+p)i\}$.
\end{proof}

\begin{lem}\label{biregular}
For every odd prime power $p$, there is a biregular quaternary Hadamard matrix of order $(2+2p^2)(1+2p^2)^k$  for each non-negative integer $k$.
\end{lem}
\begin{proof}
Let $D$ be the COD$(2+2p^2;1,2p^2+1)$ constructed in Lemma \ref{main3} on variables $a$ and $b$ by taking $b=c=d$. Let $q=1+2p^2$. The row sums of $D$ belongs to $\{a+b,a-b+2pai\}$. Let $Q_q$ be the skew-core of the skew-type quaternary Hadamard matrix of order $q$ constructed in Lemma \ref{doubling}. By recursively changing $b$ to $aI+bQ_q$ and $a$ to $bJ_{q}$, we get a biregular COD$(q^k(q+1);q^k,q^{k+1})$, say $D_k$, for each positive integer $k$, with row sums as follows.
 \begin{itemize}
 \item Row sums of $D$ belongs to $\{a+b, a-b+2pai\}$.
 \item Row sums of $D_{2m}$ belongs to $\{q^m(a-b+2pbi),q^m(a+b)\}$.
 \item Row sums of $D_{2m+1}$ belongs to $\{q^m(qb-a+2pai),q^m(qb+a)\}$.
 \end{itemize}
\end{proof}

 \subsection{An application to the excess problem}
 
 As demonstrated in the cases of orders $32$, $40$, and $44$ \cite{order32, wolfhadi, kou, kou40}, the substantial variation in excess across equivalence classes poses a major obstacle to determining maximum excess values. Although the theoretical upper bound for the excess of a Hadamard matrix of order $4n$ is $4n\sqrt{4n}$, this bound is rarely attainable in practice. Currently, sharper and more tractable upper bounds are known for certain structured orders, specifically $4m^2$, $(2m+1)^2 + 3$, and $(2m+1)^2 - 1$. For these orders, several infinite families of Hadamard matrices have been constructed that achieve their respective bounds exactly \cite{koji-sho,hadi,ksexcess, kou}. For most other orders, however, determining the maximum excess remains challenging, as excess values can vary widely across inequivalent matrices.

Despite these difficulties, interest in matrices with large excess continues to grow, driven both by their role in generating large determinants \cite{max-det} and their applications in quantum nonlocality \cite{karol}. 

Recent progress has further shown that quaternary Hadamard matrices can be constructed to lead to the maximum excess in real cases  \cite{ks}.
In this section, we show that skew-regular quaternary Hadamard matrices provide an effective framework for constructing Hadamard matrices with exceptionally high excess. We begin by recalling several known results.

\begin{lem}\label{kron}
Let $H$ be a matrix of order $n$ and row sum $a+ib$, $K$ a matrix of order $m$ and row sum $c+id$, then the  matrix $H\otimes K$ of order $nm$ has the row sum $(a+bi)(c+di)=ac-bd+i(ad+bc)$.\end{lem}
The following lemma allows us to convert quaternary Hadamard matrices to Hadamard matrices.
\begin{lem}[\cite{cohn}]
Let $H$ be a quaternary Hadamard matrix of order $n$. Write $H=A+iB$, where $A, B$ $(0,\pm 1)$-matrices of order $n$. Then 
\[ K=A\otimes C + B\otimes D\] where \[C=\begin{bmatrix} - & 1\\1 & 1\end{bmatrix} \text{ and } 
D=\begin{bmatrix} 1 & 1\\1 & -\end{bmatrix}\] is a Hadamard matrix of order $2n$. 
\end{lem}

We now present a lemma necessary for the application of skew-regular matrices. While the proof is elementary, it is omitted here due to its length.
 
\begin{lem}\label{min}
Let $p,q$, $1\le p\le q$ be two integers and $p \neq 2$. Then
\begin{equation*}
    2(1+p^2)(1+q^2) \left( \sqrt{2(1+p^2)(1+q^2)} - (pq+p+q-1) \right) \ge 8(1+q^2)(\sqrt{1+q^2}-q)
\end{equation*}
Furthermore, equality is attained if and only if $p=1$. \end{lem}

\begin{rem}\label{r1}
To construct Hadamard matrices with large excess using the Kronecker product, it is necessary to utilize quaternary Hadamard matrices with row sums $a+ib$ where the difference $|a-b|$ is as small as possible; a requirement formally established in Theorem \ref{main-excess}. Let $H_p$ and $H_q$ be skew-regular quaternary Hadamard matrices corresponding to prime powers $p$ and $q$, and let $H_{pq} = H_p \otimes H_q$ denote their Kronecker product. Corresponding to $p=1$, the smallest skew-regular quaternary Hadamard matrix is 
\[ M=\begin{bmatrix} 1 & i\\ i &1\end{bmatrix}. \] \end{rem}

\begin{thm}\label{main-excess}
For $p=1$ or odd prime powers $1 \le p < q$, the largest excess of the Hadamard matrix derived from $H_{pq}$ is attained in the specific case where $p=1$.
\end{thm}
\begin{proof}
The quaternary Hadamard matrix $H_{pq}$ is of order $n=(1+p^2)(1+q^2)$ with the row sum $a+ib$, where $a=1-pq$ and $b=-(p+q)$. Converting the quaternary Hadamard matrix $H_{pq}$ to a Hadamard matrix, we have a matrix of order $2n$ with the row sum in $\{2-2pq, -2(p+q)\}$. By negating the matrix, we have a matrix $\mathbb{H}$ of order $2n$ and excess $2n(pq+p+q-1)$. An upper bound for the excess of $\mathbb{H}$ is $2n\sqrt{2n}$. Applying Lemma \ref{min}, we have 
\[ 2n\left(\sqrt{2n} - (pq+p+q-1)\right) \ge 8(1+q^2)(\sqrt {1+q^2}-q),\]
with equality attained if $p=1$, thereby minimizing the difference between the excess of $\mathbb{H}$ and its theoretical upper bound. Note that for $p=1$, $a=1-q$ and $b=-1-q$, so $|a-b|=|1-q+1+q|=2$, which is as small as possible for $p=1$ or odd prime powers $1 \le p < q$.
\end{proof}

In reference to Theorem \ref{main-excess}, an application of skew-regular Hadamard matrices of order $1+p^2$, $p$ an odd prime power, constructed in Theorem \ref{main}, we introduce a new infinite class of Hadamard matrices with maximum excess. We make use of the following lemma, see~\cite{GS}.
\begin{lem}\label{exboundwe}
    Let $W$ be a weighing matrix $W(n,k^2)$. Then the excess of $W$ is at most $nk$, and the excess is equal to $nk$ if and only if all row sums of the matrix are equal to $k$.
\end{lem}

\begin{thm}\label{main2}
For each odd prime power $p$, there is a Hadamard matrix of order $4+4p^2$ with the largest excess $8p(1+p^2)$. \end{thm}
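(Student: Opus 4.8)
The plan is to manufacture the desired matrix by first doubling the quaternary Hadamard matrix of Theorem \ref{main} into a \emph{regular} object, then passing to a real matrix of order $4(1+p^2)$ through a fixed real representation of $\{\pm1,\pm i\}$, and finally reading off the excess from a splitting into two weighing matrices. Write $n=1+p^2$ and let $H=I+Q$ be the skew-regular matrix of Theorem \ref{main}, so $Q^*=-Q$, $QQ^*=p^2I$, the row sums of $H$ are $1-pi$, and hence $Q$ has row sums $-pi$ while $H^*=I-Q$ has row sums $1+pi$.

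First I would double $H$ with a sign twist on the lower block, setting
\[
K=\begin{bmatrix} H & iH \\ -iH^* & -H^*\end{bmatrix}
=\begin{bmatrix} I & 0 \\ 0 & -I\end{bmatrix}\begin{bmatrix} H & iH \\ iH^* & H^*\end{bmatrix}.
\]
Since left multiplication by a real signed identity is orthogonal, the computation behind Lemma \ref{doubling} shows that $K$ is again a quaternary Hadamard matrix, now of order $2n$, and a direct evaluation gives row sums $(1+p)+(1-p)i$ on the top block and $(p-1)-(1+p)i$ on the bottom block. The role of the sign twist is to put the large part of every row sum on a consistent footing, so that after realification the contributions reinforce one another rather than cancel.

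Next I would realify. Fix the order-two Hadamard matrix $P=\begin{bmatrix}1&1\\1&-1\end{bmatrix}$ and the standard embedding $D(a+bi)=\begin{bmatrix}a&-b\\b&a\end{bmatrix}$, for which $D(w)^t=D(\bar w)$, and replace every entry $z$ of $K$ by the $2\times2$ block $R(z)=P\,D(z)$, whose entries lie in $\{\pm1\}$. Because $D$ is multiplicative and $PP^t=2I$, one gets
\[
\sum_\beta R(z_{\alpha\beta})R(z_{\gamma\beta})^t
=P\,D\!\left((KK^*)_{\alpha\gamma}\right)P^t
=4n\,\delta_{\alpha\gamma}I_2 ,
\]
so the resulting real $\pm1$ matrix $\mathcal H$ of order $4n=4+4p^2$ satisfies $\mathcal H\mathcal H^t=4nI$ and is a genuine Hadamard matrix. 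A short bookkeeping shows each complex row sum $a+bi$ produces two real row sums $2a$ and $-2b$, so every row sum of $\mathcal H$ lies in $\{2(p+1),\,2(p-1)\}$, each value occurring $2n$ times.

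Finally I would compute and certify the excess. Using $H=I+Q$ and $Q^*=-Q$, split $K=K_Q+K_I$ with $K_Q=\begin{bmatrix}Q&iQ\\iQ&Q\end{bmatrix}$ and $K_I=\begin{bmatrix}I&iI\\-iI&-I\end{bmatrix}$, which have disjoint supports; applying $R$ termwise gives $\mathcal H=W_1+W_2$ with complementary supports, where $W_1=W(4n,(2p)^2)$ and $W_2=W(4n,4)$, the weights $4p^2$ and $4$ summing to $4n$. One checks that $K_Q$ has all row sums $p-pi$, so $W_1$ is a regular weighing matrix with every row sum equal to $2p$, while $K_I$ forces the row sums of $W_2$ to be $\pm2$ in balance, whence $W_2$ has excess $0$. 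Therefore the excess of $\mathcal H$ equals that of $W_1$, namely $4n\cdot 2p=8p(1+p^2)$. Maximality is exactly the weighing bound recalled before the theorem: a $W(4n,(2p)^2)$ has excess at most $4n\cdot 2p$, with equality precisely when all its row sums equal $2p$, which is the situation realized by $W_1$; hence $8p(1+p^2)$ is the largest excess. The main obstacle is the joint requirement in the realification step—arranging the doubling and the sign twist so that $\mathcal H$ is simultaneously a true Hadamard matrix and has its weight-$(2p)^2$ component regular with constant row sum $2p$. This is precisely where the regularity and skewness of the matrix from Theorem \ref{main} are used, and verifying that the equality case of the weighing bound is attained is the crux of the argument.
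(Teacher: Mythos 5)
Your construction is sound and, apart from cosmetic differences, it is the paper's: the paper doubles $H$ to $\begin{bmatrix}H&iH\\ iH&H\end{bmatrix}$, realifies entrywise with the same kind of $2\times 2$ blocks, and splits the resulting Hadamard matrix of order $4+4p^2$ into a $W(4+4p^2,4p^2)$ coming from $Q$ and a $W(4+4p^2,4)$ coming from $I$ with disjoint supports. Your sign twist on the lower block merely performs at the complex level the row negations that the paper carries out afterwards to make every row sum of the weight-$4p^2$ part equal to $+2p$. The verification that $\mathcal H$ is Hadamard, that its row sums are $2(p\pm 1)$, and that its excess equals $8p(1+p^2)$ is correct.

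The gap is in your last step. The weighing bound $\sigma\bigl(W(4n,(2p)^2)\bigr)\le 4n\cdot 2p$ shows that $W_1$, viewed in isolation, cannot exceed excess $8p(1+p^2)$ under row and column negations; it does not show this for $\mathcal H=W_1+W_2$. For sign vectors $\epsilon,\delta$ the excess of the corresponding equivalent Hadamard matrix is $\epsilon^t W_1\delta+\epsilon^t W_2\delta$, and you must rule out choices for which $\epsilon^t W_1\delta$ falls below $8pn$ by less than $\epsilon^t W_2\delta$ gains; the two bounds applied separately only give $\epsilon^t\mathcal H\delta\le 8pn+8n$. This interaction is precisely what the second half of the paper's proof is devoted to: it uses that the weight-$4p^2$ part has all row \emph{and} column sums equal to $2p$, so that each negation costs $4p$ there while the weight-$4$ part can gain at most $4$, and it then examines the $4\times 4$ block structure of the weight-$4$ part to exclude combined row-and-column negations. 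You need an argument of this kind (or some other bound on $\epsilon^t W_1\delta+\epsilon^t W_2\delta$) before you may conclude that $8p(1+p^2)$ is the \emph{largest} excess of $\mathcal H$ rather than merely its excess.
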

\begin{proof}
Let $H=I+Q$ be the skew-regular quaternary matrix constructed in Theorem \ref{main}. Consider the three quaternary weighing matrices \[Q_1=M\otimes H= \begin{bmatrix} H & iH\\iH & H\end{bmatrix},\] \[Q_2=\begin{bmatrix} Q & iQ\\iQ & Q\end{bmatrix},\] and 
\[Q_3=\begin{bmatrix} I & iI\\iI & I\end{bmatrix}.\] Writing $Q_j=A_j+iB_j$, $j=1,2,3$, where $A_j$ and $B_j$ are $(0,\pm 1)$-matrices. Converting the three matrices to (real) weighing matrices, we have for $j=1,2,3,$
$$W_j=A_j\otimes \begin{bmatrix} 1 & 1\\1 &-\end{bmatrix} + B_j\otimes \begin{bmatrix} - & 1\\1 &1\end{bmatrix}.$$  Then $W_1$ is a weighing matrix $W(4+4p^2,4+4p^2)$ (i.e., a Hadamard matrix), $W_2$ is a weighing matrix $W(4+4p^2,4p^2)$, and $W_3$ is a weighing matrix\\ $W(4+4p^2,4)$, where $W_1=W_2+W_3$. Then define $S_{j,k}$, for $j=1,2,3$, and $k=1,2,\ldots,4+4p^2,$ to be the $k$-th row sum of the $j$-th weighing matrix. From the construction, we know
\begin{itemize}
\item $S_{1,k}$ alternates between $2+2p$ and $2-2p$, respectively,
\item $S_{2,k}$ alternates between $2p$ and $-2p$, respectively, and
\item $S_{3,k}=2$, for $k=1,2,\cdots,4+4p^2$.
\end{itemize}
Negating each row with negative row sums in $W_1$ increases the row sums of $W_2$ by $4p$, and decreases the row sums of $W_3$ by $4$, thus increasing the row sums of $W_1$ by $4p-4$ for each negation. After negating all the negative row sums in $W_1$, the total sum of $W_3$ is zero, and by Lemma~\ref{exboundwe}, $W_2$ achieves its maximum excess of $2p(4+4p^2)$. Ignoring the zero entries, after negations of the rows of $W_1$, we can assume that $W_3$ consists of blocks of the form
\[B=\begin{pmatrix} 1 & 1 &- &1\\- & 1 & - &-\\- & 1 &1 & 1\\-  &- &- &1 \end{pmatrix}.\]
 Since both the row sums and column sums of $W_2$ are all $2p$, negating columns and rows of $B$ does not increase the total sum of $W_1$. Since no row and column negations can increase the excess, it follows that the maximum excess of the Hadamard matrix $W_1$ of order $4+4p^2$ is $8p(1+p^2)$.
\end{proof}
\begin{rem}
For $p=3$, Theorem \ref{main2} provides a Hadamard matrix of order $40$ with the maximum excess of $240$. A Hadamard matrix of order $40$ with the largest excess of $240$ was constructed in \cite{wolfhadi}. The largest excess achieved by Hadamard matrices of order $40$ is shown in \cite{kou40} to be $244$.  \end{rem}
\begin{rem} An infinite class of Hadamard matrices of order $n^2+3$ is known achieving the maximum excess of $n(n^2+3)$ \cite{koji-sho,koukouvinos1990}. The Hadamard matrices shown in Theorem \ref{main2} are of order $(2p)^2+4$, achieving a maximum excess of $(2p)(4+4p^2)$. \end{rem}
\subsection{A second class of matrices with large excess}
While the class of regular quaternary Hadamard matrices of order $n$ introduced in the preceding section satisfies $|a-b|=2$ and possesses an excess of $n\sqrt{n-4}$, we now consider matrices of order $m$ where $|a-b|=4$. As we shall demonstrate, these matrices yield an excess of $m\sqrt{m-16}$. 
 \begin{lem}\label{twin}
Let $p$ and $q$ be consecutive odd integers. Then the following identity holds:
\[
p^2 + q^2 + p^2q^2 - 3 = (pq+1)^2.
\]
\end{lem}
\begin{thm}\label{main3}
Let $p$ and $q$ be two odd prime powers with $q-p=2$, and $H_p,H_q$ as defined in Remark \ref{r1}. 
There is a Hadamard matrix of order $m=4(1+p^2)(1+q^2)$ with excess $m\sqrt{m-16}=2m(1+pq)$.
\end{thm}
\begin{proof}
Assuming $H_p$ has row sum $1-pi$ and $H_q$ has row sum $1+qi$, let $\mathbb{H}_0=H_p\otimes H_q$. The row sum of $\mathbb{H}_0$ is $pq+1+(q-p)i$. Let $\mathbb{H}_1=\mathbb{H}_0\otimes M$. Then $\mathbb{H}_1$ has row sum $pq-1+(pq+3)i$. Finally, let $\mathbb{H}$ be the Hadamard matrix of order $m=4(p^2+1)(q^2+1)$ derived from $\mathbb{H}_1$. The excess of $\mathbb{H}$ is $m\sqrt{m-16}$. By Lemma \ref{twin}, $\sqrt{m-16}=2(1+pq)$.
\end{proof}
\begin{rem}
Theorems \ref{main-excess} and \ref{main3} provide a guide on how to generate Hadamard matrices with large excesses. \end{rem}
\begin{thm}
There is a regular quaternary Hadamard matrix of order $2(1+p^2)^2$ with row sum $1+p^2 + i (1+p^2)$ leading to regular Hadamard matrices of order $4(1+p^2)^2$, for each odd prime power $p$. \end{thm}
\begin{proof}
Let $H_{j}$ be the skew-regular quaternary Hadamard matrix of order $1+p^2$ with row sum $1-pi$ for  $j=1$ and $1+pi$ for $j=2$. The regular quaternary Hadamard matrix $\mathbb{H}_0=H_1\otimes H_2$ has row sum $1+p^2$. The matrix $\mathbb{H}_1=M\otimes \mathbb{H}_0$ has row sum $1+p^2+i(1+p^2)$ and the Hadamard matrix $\mathbb{H}$ of order $4(1+p^2)^2$ derived from $\mathbb{H}_1$ has row sum $2(1+p^2)$ and is regular.
\end{proof}

\section*{Acknowledgement} Both authors are {supported by the Natural Sciences and Engineering Research Council of Canada (NSERC).}

\begin{appendices}
\newpage

\section{Example of main construction}\label{app}
Suppose $p=5$. We have \[ H = \scriptsize \setlength{\arraycolsep}{1.5pt} \left[\begin{array}{cccccccccccccccccccccccccc}
1 &j &j &j &j &j &j &j &j &j &j &j &j &j &j &j &j &j &j &j &j &j &j &j &j &j 
\\
j & 1 &j &j &j &j &j & i & i &j & i &j &j & i & i & i &j & i & i & i &j &j & i &j & i & i 
\\
j &j & 1 &j &j &j & i &j & i & i &j & i &j &j & i & i &j &j & i & i & i & i &j & i &j & i 
\\
j &j &j & 1 &j &j &j & i &j & i & i & i & i &j &j & i & i &j &j & i & i & i & i &j & i &j 
\\
j &j &j &j & 1 &j & i &j & i &j & i & i & i & i &j &j & i & i &j &j & i &j & i & i &j & i 
\\
j &j &j &j &j & 1 & i & i &j & i &j &j & i & i & i &j & i & i & i &j &j & i &j & i & i &j 
\\
j &j & i &j & i & i & 1 &j &j &j &j &j & i & i &j & i &j &j & i & i & i &j & i & i & i &j 
\\
j & i &j & i &j & i &j & 1 &j &j &j & i &j & i & i &j & i &j &j & i & i &j &j & i & i & i 
\\
j & i & i &j & i &j &j &j & 1 &j &j &j & i &j & i & i & i & i &j &j & i & i &j &j & i & i 
\\
j &j & i & i &j & i &j &j &j & 1 &j & i &j & i &j & i & i & i & i &j &j & i & i &j &j & i 
\\
j & i &j & i & i &j &j &j &j &j & 1 & i & i &j & i &j &j & i & i & i &j & i & i & i &j &j 
\\
j &j & i & i & i &j &j & i &j & i & i & 1 &j &j &j &j &j & i & i &j & i &j &j & i & i & i 
\\
j &j &j & i & i & i & i &j & i &j & i &j & 1 &j &j &j & i &j & i & i &j & i &j &j & i & i 
\\
j & i &j &j & i & i & i & i &j & i &j &j &j & 1 &j &j &j & i &j & i & i & i & i &j &j & i 
\\
j & i & i &j &j & i &j & i & i &j & i &j &j &j & 1 &j & i &j & i &j & i & i & i & i &j &j 
\\
j & i & i & i &j &j & i &j & i & i &j &j &j &j &j & 1 & i & i &j & i &j &j & i & i & i &j 
\\
j &j &j & i & i & i &j & i & i & i &j &j & i &j & i & i & 1 &j &j &j &j &j & i & i &j & i 
\\
j & i &j &j & i & i &j &j & i & i & i & i &j & i &j & i &j & 1 &j &j &j & i &j & i & i &j 
\\
j & i & i &j &j & i & i &j &j & i & i & i & i &j & i &j &j &j & 1 &j &j &j & i &j & i & i 
\\
j & i & i & i &j &j & i & i &j &j & i &j & i & i &j & i &j &j &j & 1 &j & i &j & i &j & i 
\\
j &j & i & i & i &j & i & i & i &j &j & i &j & i & i &j &j &j &j &j & 1 & i & i &j & i &j 
\\
j &j & i & i &j & i &j &j & i & i & i &j & i & i & i &j &j & i &j & i & i & 1 &j &j &j &j 
\\
j & i &j & i & i &j & i &j &j & i & i &j &j & i & i & i & i &j & i &j & i &j & 1 &j &j &j 
\\
j &j & i &j & i & i & i & i &j &j & i & i &j &j & i & i & i & i &j & i &j &j &j & 1 &j &j 
\\
j & i &j & i &j & i & i & i & i &j &j & i & i &j &j & i &j & i & i &j & i &j &j &j & 1 &j 
\\
j & i & i &j & i &j &j & i & i & i &j & i & i & i &j &j & i &j & i & i &j &j &j &j &j & 1 
\end{array}\right]
\]

and we can take $v$ to be
\[
\left[ \scriptsize
\begin{array}{*{26}{c}} 
1 & 1 & 1 & 1 & 1 & 1 & -i & -i & -i & -i & -i & -i & -i & -i & -i & -i & i & i & i & i & i & i & i & i & i & i  \\[-1ex]
\multicolumn{1}{c}{\underbrace{\hphantom{1}}_{1}}
& \multicolumn{5}{c}{\underbrace{\hphantom{\quad \quad \quad \quad \quad  \quad}}_{\mathbb{F}_p}}
& \multicolumn{5}{c}{\underbrace{\hphantom{\quad \quad \quad \quad \quad \quad \quad \quad \quad  \quad}}_{C_1}}
& \multicolumn{5}{c}{\underbrace{\hphantom{\quad \quad \quad \quad \quad \quad \quad \quad \quad  \quad}}_{C_2}}
& \multicolumn{5}{c}{\underbrace{\hphantom{\quad \quad \quad \quad \quad  \quad}}_{C_3}}
& \multicolumn{5}{c}{\underbrace{\hphantom{\quad \quad \quad \quad \quad  \quad}}_{C_4}}
\end{array}
\right].
\]
Then \[S = MH M^* = \scriptsize \setlength{\arraycolsep}{1.5pt}\left[\begin{array}{cccccccccccccccccccccccccc}
1 & j & j & j & j & j & - & - & - & - & - & - & - & - & - & - & 1 & 1 & 1 & 1 & 1 & 1 & 1 & 1 & 1 & 1 
\\
 j & 1 & j & j & j & j & - & 1 & 1 & - & 1 & - & - & 1 & 1 & 1 & 1 & - & - & - & 1 & 1 & - & 1 & - & - 
\\
 j & j & 1 & j & j & j & 1 & - & 1 & 1 & - & 1 & - & - & 1 & 1 & 1 & 1 & - & - & - & - & 1 & - & 1 & - 
\\
 j & j & j & 1 & j & j & - & 1 & - & 1 & 1 & 1 & 1 & - & - & 1 & - & 1 & 1 & - & - & - & - & 1 & - & 1 
\\
 j & j & j & j & 1 & j & 1 & - & 1 & - & 1 & 1 & 1 & 1 & - & - & - & - & 1 & 1 & - & 1 & - & - & 1 & - 
\\
 j & j & j & j & j & 1 & 1 & 1 & - & 1 & - & - & 1 & 1 & 1 & - & - & - & - & 1 & 1 & - & 1 & - & - & 1 
\\
 1 & 1 & - & 1 & - & - & 1 & j & j & j & j & j & i & i & j & i & i & i & j & j & j & i & j & j & j & i 
\\
 1 & - & 1 & - & 1 & - & j & 1 & j & j & j & i & j & i & i & j & j & i & i & j & j & i & i & j & j & j 
\\
 1 & - & - & 1 & - & 1 & j & j & 1 & j & j & j & i & j & i & i & j & j & i & i & j & j & i & i & j & j 
\\
 1 & 1 & - & - & 1 & - & j & j & j & 1 & j & i & j & i & j & i & j & j & j & i & i & j & j & i & i & j 
\\
 1 & - & 1 & - & - & 1 & j & j & j & j & 1 & i & i & j & i & j & i & j & j & j & i & j & j & j & i & i 
\\
 1 & 1 & - & - & - & 1 & j & i & j & i & i & 1 & j & j & j & j & i & j & j & i & j & i & i & j & j & j 
\\
 1 & 1 & 1 & - & - & - & i & j & i & j & i & j & 1 & j & j & j & j & i & j & j & i & j & i & i & j & j 
\\
 1 & - & 1 & 1 & - & - & i & i & j & i & j & j & j & 1 & j & j & i & j & i & j & j & j & j & i & i & j 
\\
 1 & - & - & 1 & 1 & - & j & i & i & j & i & j & j & j & 1 & j & j & i & j & i & j & j & j & j & i & i 
\\
 1 & - & - & - & 1 & 1 & i & j & i & i & j & j & j & j & j & 1 & j & j & i & j & i & i & j & j & j & i 
\\
 - & - & - & 1 & 1 & 1 & i & j & j & j & i & i & j & i & j & j & 1 & j & j & j & j & j & i & i & j & i 
\\
 - & 1 & - & - & 1 & 1 & i & i & j & j & j & j & i & j & i & j & j & 1 & j & j & j & i & j & i & i & j 
\\
 - & 1 & 1 & - & - & 1 & j & i & i & j & j & j & j & i & j & i & j & j & 1 & j & j & j & i & j & i & i 
\\
 - & 1 & 1 & 1 & - & - & j & j & i & i & j & i & j & j & i & j & j & j & j & 1 & j & i & j & i & j & i 
\\
 - & - & 1 & 1 & 1 & - & j & j & j & i & i & j & i & j & j & i & j & j & j & j & 1 & i & i & j & i & j 
\\
 - & - & 1 & 1 & - & 1 & i & i & j & j & j & i & j & j & j & i & j & i & j & i & i & 1 & j & j & j & j 
\\
 - & 1 & - & 1 & 1 & - & j & i & i & j & j & i & i & j & j & j & i & j & i & j & i & j & 1 & j & j & j 
\\
 - & - & 1 & - & 1 & 1 & j & j & i & i & j & j & i & i & j & j & i & i & j & i & j & j & j & 1 & j & j 
\\
 - & 1 & - & 1 & - & 1 & j & j & j & i & i & j & j & i & i & j & j & i & i & j & i & j & j & j & 1 & j 
\\
 - & 1 & 1 & - & 1 & - & i & j & j & j & i & j & j & j & i & i & i & j & i & i & j & j & j & j & j & 1 
\end{array}\right]
\]

\section{Skew-regular quaternary Hadamard of order 50 and equivalence to row sums $\pm 5 \pm 5i$.}\label{app2}
Below is the skew-regular quaternary Hadamard with row sums $1-7i$ coming from Theorem \ref{main}.
\[H=\scriptsize \setlength{\arraycolsep}{0.5pt}\left[\begin{array}{cccccccccccccccccccccccccccccccccccccccccccccccccc}
1 & j & j & j & j & j & j & j & 1 & 1 & 1 & 1 & 1 & 1 & 1 & 1 & 1 & 1 & 1 & 1 & 1 & 1 & 1 & 1 & 1 & 1 & 1 & 1 & 1 & - & - & - & - & - & - & - & - & - & - & - & - & - & - & - & - & - & - & - & - & -
\\
 j & 1 & j & j & j & j & j & j & - & 1 & - & - & 1 & - & 1 & - & 1 & 1 & - & - & 1 & - & - & - & - & 1 & 1 & 1 & - & 1 & 1 & - & - & - & 1 & 1 & 1 & 1 & - & 1 & 1 & - & - & 1 & - & 1 & - & 1 & 1 & -
\\
 j & j & 1 & j & j & j & j & j & 1 & - & 1 & - & - & 1 & - & - & - & 1 & 1 & - & - & 1 & - & - & - & - & 1 & 1 & 1 & 1 & 1 & 1 & - & - & - & 1 & - & 1 & 1 & - & 1 & 1 & - & - & 1 & - & 1 & - & 1 & 1
\\
 j & j & j & 1 & j & j & j & j & - & 1 & - & 1 & - & - & 1 & 1 & - & - & 1 & 1 & - & - & 1 & - & - & - & - & 1 & 1 & 1 & 1 & 1 & 1 & - & - & - & - & - & 1 & 1 & - & 1 & 1 & 1 & - & 1 & - & 1 & - & 1
\\
 j & j & j & j & 1 & j & j & j & 1 & - & 1 & - & 1 & - & - & - & 1 & - & - & 1 & 1 & - & 1 & 1 & - & - & - & - & 1 & - & 1 & 1 & 1 & 1 & - & - & 1 & - & - & 1 & 1 & - & 1 & 1 & 1 & - & 1 & - & 1 & -
\\
 j & j & j & j & j & 1 & j & j & - & 1 & - & 1 & - & 1 & - & - & - & 1 & - & - & 1 & 1 & 1 & 1 & 1 & - & - & - & - & - & - & 1 & 1 & 1 & 1 & - & 1 & 1 & - & - & 1 & 1 & - & - & 1 & 1 & - & 1 & - & 1
\\
 j & j & j & j & j & j & 1 & j & - & - & 1 & - & 1 & - & 1 & 1 & - & - & 1 & - & - & 1 & - & 1 & 1 & 1 & - & - & - & - & - & - & 1 & 1 & 1 & 1 & - & 1 & 1 & - & - & 1 & 1 & 1 & - & 1 & 1 & - & 1 & -
\\
 j & j & j & j & j & j & j & 1 & 1 & - & - & 1 & - & 1 & - & 1 & 1 & - & - & 1 & - & - & - & - & 1 & 1 & 1 & - & - & 1 & - & - & - & 1 & 1 & 1 & 1 & - & 1 & 1 & - & - & 1 & - & 1 & - & 1 & 1 & - & 1
\\
 - & 1 & - & 1 & - & 1 & 1 & - & 1 & j & j & j & j & j & j & i & j & i & i & j & i & j & i & j & j & i & i & j & i & j & j & j & i & i & i & j & j & j & i & i & i & j & j & j & j & i & j & j & i & i
\\
 - & - & 1 & - & 1 & - & 1 & 1 & j & 1 & j & j & j & j & j & j & i & j & i & i & j & i & i & i & j & j & i & i & j & j & j & j & j & i & i & i & j & j & j & i & i & i & j & i & j & j & i & j & j & i
\\
 - & 1 & - & 1 & - & 1 & - & 1 & j & j & 1 & j & j & j & j & i & j & i & j & i & i & j & j & i & i & j & j & i & i & i & j & j & j & j & i & i & j & j & j & j & i & i & i & i & i & j & j & i & j & j
\\
 - & 1 & 1 & - & 1 & - & 1 & - & j & j & j & 1 & j & j & j & j & i & j & i & j & i & i & i & j & i & i & j & j & i & i & i & j & j & j & j & i & i & j & j & j & j & i & i & j & i & i & j & j & i & j
\\
 - & - & 1 & 1 & - & 1 & - & 1 & j & j & j & j & 1 & j & j & i & j & i & j & i & j & i & i & i & j & i & i & j & j & i & i & i & j & j & j & j & i & i & j & j & j & j & i & j & j & i & i & j & j & i
\\
 - & 1 & - & 1 & 1 & - & 1 & - & j & j & j & j & j & 1 & j & i & i & j & i & j & i & j & j & i & i & j & i & i & j & j & i & i & i & j & j & j & i & i & i & j & j & j & j & i & j & j & i & i & j & j
\\
 - & - & 1 & - & 1 & 1 & - & 1 & j & j & j & j & j & j & 1 & j & i & i & j & i & j & i & j & j & i & i & j & i & i & j & j & i & i & i & j & j & j & i & i & i & j & j & j & j & i & j & j & i & i & j
\\
 - & 1 & 1 & - & 1 & 1 & - & - & i & j & i & j & i & i & j & 1 & j & j & j & j & j & j & i & j & i & i & j & i & j & j & i & i & j & j & i & j & j & j & j & i & i & i & j & j & j & i & i & i & j & j
\\
 - & - & 1 & 1 & - & 1 & 1 & - & j & i & j & i & j & i & i & j & 1 & j & j & j & j & j & j & i & j & i & i & j & i & j & j & i & i & j & j & i & j & j & j & j & i & i & i & j & j & j & i & i & i & j
\\
 - & - & - & 1 & 1 & - & 1 & 1 & i & j & i & j & i & j & i & j & j & 1 & j & j & j & j & i & j & i & j & i & i & j & i & j & j & i & i & j & j & i & j & j & j & j & i & i & j & j & j & j & i & i & i
\\
 - & 1 & - & - & 1 & 1 & - & 1 & i & i & j & i & j & i & j & j & j & j & 1 & j & j & j & j & i & j & i & j & i & i & j & i & j & j & i & i & j & i & i & j & j & j & j & i & i & j & j & j & j & i & i
\\
 - & 1 & 1 & - & - & 1 & 1 & - & j & i & i & j & i & j & i & j & j & j & j & 1 & j & j & i & j & i & j & i & j & i & j & j & i & j & j & i & i & i & i & i & j & j & j & j & i & i & j & j & j & j & i
\\
 - & - & 1 & 1 & - & - & 1 & 1 & i & j & i & i & j & i & j & j & j & j & j & j & 1 & j & i & i & j & i & j & i & j & i & j & j & i & j & j & i & j & i & i & i & j & j & j & i & i & i & j & j & j & j
\\
 - & 1 & - & 1 & 1 & - & - & 1 & j & i & j & i & i & j & i & j & j & j & j & j & j & 1 & j & i & i & j & i & j & i & i & i & j & j & i & j & j & j & j & i & i & i & j & j & j & i & i & i & j & j & j
\\
 - & 1 & 1 & - & - & - & 1 & 1 & i & i & j & i & i & j & j & i & j & i & j & i & i & j & 1 & j & j & j & j & j & j & j & i & j & j & i & j & i & j & i & i & j & j & i & j & j & j & j & i & i & i & j
\\
 - & 1 & 1 & 1 & - & - & - & 1 & j & i & i & j & i & i & j & j & i & j & i & j & i & i & j & 1 & j & j & j & j & j & i & j & i & j & j & i & j & j & j & i & i & j & j & i & j & j & j & j & i & i & i
\\
 - & 1 & 1 & 1 & 1 & - & - & - & j & j & i & i & j & i & i & i & j & i & j & i & j & i & j & j & 1 & j & j & j & j & j & i & j & i & j & j & i & i & j & j & i & i & j & j & i & j & j & j & j & i & i
\\
 - & - & 1 & 1 & 1 & 1 & - & - & i & j & j & i & i & j & i & i & i & j & i & j & i & j & j & j & j & 1 & j & j & j & i & j & i & j & i & j & j & j & i & j & j & i & i & j & i & i & j & j & j & j & i
\\
 - & - & - & 1 & 1 & 1 & 1 & - & i & i & j & j & i & i & j & j & i & i & j & i & j & i & j & j & j & j & 1 & j & j & j & i & j & i & j & i & j & j & j & i & j & j & i & i & i & i & i & j & j & j & j
\\
 - & - & - & - & 1 & 1 & 1 & 1 & j & i & i & j & j & i & i & i & j & i & i & j & i & j & j & j & j & j & j & 1 & j & j & j & i & j & i & j & i & i & j & j & i & j & j & i & j & i & i & i & j & j & j
\\
 - & 1 & - & - & - & 1 & 1 & 1 & i & j & i & i & j & j & i & j & i & j & i & i & j & i & j & j & j & j & j & j & 1 & i & j & j & i & j & i & j & i & i & j & j & i & j & j & j & j & i & i & i & j & j
\\
 1 & - & - & - & 1 & 1 & 1 & - & j & j & i & i & i & j & j & j & j & i & j & j & i & i & j & i & j & i & j & j & i & 1 & j & j & j & j & j & j & i & j & i & i & j & i & j & i & j & j & i & i & j & i
\\
 1 & - & - & - & - & 1 & 1 & 1 & j & j & j & i & i & i & j & i & j & j & i & j & j & i & i & j & i & j & i & j & j & j & 1 & j & j & j & j & j & j & i & j & i & i & j & i & i & i & j & j & i & i & j
\\
 1 & 1 & - & - & - & - & 1 & 1 & j & j & j & j & i & i & i & i & i & j & j & i & j & j & j & i & j & i & j & i & j & j & j & 1 & j & j & j & j & i & j & i & j & i & i & j & j & i & i & j & j & i & i
\\
 1 & 1 & 1 & - & - & - & - & 1 & i & j & j & j & j & i & i & j & i & i & j & j & i & j & j & j & i & j & i & j & i & j & j & j & 1 & j & j & j & j & i & j & i & j & i & i & i & j & i & i & j & j & i
\\
 1 & 1 & 1 & 1 & - & - & - & - & i & i & j & j & j & j & i & j & j & i & i & j & j & i & i & j & j & i & j & i & j & j & j & j & j & 1 & j & j & i & j & i & j & i & j & i & i & i & j & i & i & j & j
\\
 1 & - & 1 & 1 & 1 & - & - & - & i & i & i & j & j & j & j & i & j & j & i & i & j & j & j & i & j & j & i & j & i & j & j & j & j & j & 1 & j & i & i & j & i & j & i & j & j & i & i & j & i & i & j
\\
 1 & - & - & 1 & 1 & 1 & - & - & j & i & i & i & j & j & j & j & i & j & j & i & i & j & i & j & i & j & j & i & j & j & j & j & j & j & j & 1 & j & i & i & j & i & j & i & j & j & i & i & j & i & i
\\
 1 & - & 1 & 1 & - & - & 1 & - & j & j & j & i & i & i & j & j & j & i & i & i & j & j & j & j & i & j & j & i & i & i & j & i & j & i & i & j & 1 & j & j & j & j & j & j & i & j & i & i & j & i & j
\\
 1 & - & - & 1 & 1 & - & - & 1 & j & j & j & j & i & i & i & j & j & j & i & i & i & j & i & j & j & i & j & j & i & j & i & j & i & j & i & i & j & 1 & j & j & j & j & j & j & i & j & i & i & j & i
\\
 1 & 1 & - & - & 1 & 1 & - & - & i & j & j & j & j & i & i & j & j & j & j & i & i & i & i & i & j & j & i & j & j & i & j & i & j & i & j & i & j & j & 1 & j & j & j & j & i & j & i & j & i & i & j
\\
 1 & - & 1 & - & - & 1 & 1 & - & i & i & j & j & j & j & i & i & j & j & j & j & i & i & j & i & i & j & j & i & j & i & i & j & i & j & i & j & j & j & j & 1 & j & j & j & j & i & j & i & j & i & i
\\
 1 & - & - & 1 & - & - & 1 & 1 & i & i & i & j & j & j & j & i & i & j & j & j & j & i & j & j & i & i & j & j & i & j & i & i & j & i & j & i & j & j & j & j & 1 & j & j & i & j & i & j & i & j & i
\\
 1 & 1 & - & - & 1 & - & - & 1 & j & i & i & i & j & j & j & i & i & i & j & j & j & j & i & j & j & i & i & j & j & i & j & i & i & j & i & j & j & j & j & j & j & 1 & j & i & i & j & i & j & i & j
\\
 1 & 1 & 1 & - & - & 1 & - & - & j & j & i & i & i & j & j & j & i & i & i & j & j & j & j & i & j & j & i & i & j & j & i & j & i & i & j & i & j & j & j & j & j & j & 1 & j & i & i & j & i & j & i
\\
 1 & - & 1 & - & - & 1 & - & 1 & j & i & i & j & j & i & j & j & j & j & i & i & i & j & j & j & i & i & i & j & j & i & i & j & i & i & j & j & i & j & i & j & i & i & j & 1 & j & j & j & j & j & j
\\
 1 & 1 & - & 1 & - & - & 1 & - & j & j & i & i & j & j & i & j & j & j & j & i & i & i & j & j & j & i & i & i & j & j & i & i & j & i & i & j & j & i & j & i & j & i & i & j & 1 & j & j & j & j & j
\\
 1 & - & 1 & - & 1 & - & - & 1 & i & j & j & i & i & j & j & i & j & j & j & j & i & i & j & j & j & j & i & i & i & j & j & i & i & j & i & i & i & j & i & j & i & j & i & j & j & 1 & j & j & j & j
\\
 1 & 1 & - & 1 & - & 1 & - & - & j & i & j & j & i & i & j & i & i & j & j & j & j & i & i & j & j & j & j & i & i & i & j & j & i & i & j & i & i & i & j & i & j & i & j & j & j & j & 1 & j & j & j
\\
 1 & - & 1 & - & 1 & - & 1 & - & j & j & i & j & j & i & i & i & i & i & j & j & j & j & i & i & j & j & j & j & i & i & i & j & j & i & i & j & j & i & i & j & i & j & i & j & j & j & j & 1 & j & j
\\
 1 & - & - & 1 & - & 1 & - & 1 & i & j & j & i & j & j & i & j & i & i & i & j & j & j & i & i & i & j & j & j & j & j & i & i & j & j & i & i & i & j & i & i & j & i & j & j & j & j & j & j & 1 & j
\\
 1 & 1 & - & - & 1 & - & 1 & - & i & i & j & j & i & j & j & j & j & i & i & i & j & j & j & i & i & i & j & j & j & i & j & i & i & j & j & i & j & i & j & i & i & j & i & j & j & j & j & j & j & 1
\end{array}\right]
\]
Then let 
\[v= \scriptsize \setlength{\arraycolsep}{1.5pt} \left[\begin{array}{cccccccccccccccccccccccccccccccccccccccccccccccccc}
i & 1 & i & i & j & 1 & 1 & 1 & i & i & i & i & 1 & - & - & 1 & - & i & 1 & 1 & j & - & 1 & - & - & i & 1 & 1 & j & - & i & i & - & - & i & i & - & i & - & - & j & 1 & 1 & - & 1 & i & - & - & j & 1
\end{array}\right].
\]
From $H$ and $D=\text{diag}(v)$, we obtain the following skew-semi-regular quaternary Hadamard matrix with row sums $\pm 5 \pm 5i$.
\[D H D^*=\scriptsize \setlength{\arraycolsep}{0.5pt}\left[\begin{array}{cccccccccccccccccccccccccccccccccccccccccccccccccc}
1 & 1 & j & j & i & 1 & 1 & 1 & 1 & 1 & 1 & 1 & i & j & j & i & j & 1 & i & i & - & j & i & j & j & 1 & i & i & - & i & - & - & i & i & - & - & i & - & i & i & 1 & j & j & i & j & - & i & i & 1 & j
\\
 - & 1 & - & - & 1 & j & j & j & i & j & i & i & 1 & 1 & - & - & - & j & - & - & i & 1 & - & 1 & 1 & j & 1 & 1 & j & - & j & i & 1 & 1 & j & j & - & j & 1 & - & i & - & - & - & - & j & 1 & - & i & -
\\
 j & 1 & 1 & j & i & 1 & 1 & 1 & 1 & - & 1 & - & j & j & i & j & i & 1 & i & j & 1 & j & j & i & i & - & i & i & - & j & 1 & 1 & i & i & - & 1 & i & 1 & j & i & - & i & j & i & i & - & j & i & - & i
\\
 j & 1 & j & 1 & i & 1 & 1 & 1 & - & 1 & - & 1 & j & i & j & i & i & - & i & i & 1 & i & i & i & i & - & j & i & - & j & 1 & 1 & j & i & - & - & i & - & j & j & 1 & i & i & j & j & 1 & i & j & 1 & i
\\
 i & - & i & i & 1 & - & - & - & - & 1 & - & 1 & j & j & j & i & i & 1 & i & j & 1 & j & j & i & j & 1 & i & i & 1 & j & - & - & i & i & 1 & 1 & i & 1 & j & i & 1 & i & j & i & j & 1 & i & j & 1 & i
\\
 - & j & - & - & 1 & 1 & j & j & i & j & i & j & - & - & 1 & - & 1 & j & - & - & i & - & 1 & - & - & i & - & - & j & 1 & i & j & - & - & j & i & - & j & 1 & 1 & i & 1 & - & 1 & 1 & j & 1 & - & j & 1
\\
 - & j & - & - & 1 & j & 1 & j & i & i & j & i & 1 & 1 & - & 1 & 1 & i & 1 & - & j & - & - & - & - & j & - & - & j & 1 & i & i & - & - & j & j & 1 & j & - & 1 & j & 1 & 1 & - & - & j & - & 1 & i & -
\\
 - & j & - & - & 1 & j & j & 1 & j & i & i & j & - & - & 1 & 1 & - & i & - & 1 & j & 1 & - & 1 & - & j & 1 & - & j & - & i & i & 1 & - & j & j & - & i & - & - & j & - & 1 & 1 & 1 & i & - & - & j & 1
\\
 - & i & - & 1 & 1 & i & i & j & 1 & j & j & j & 1 & - & - & - & - & i & - & 1 & j & - & - & - & - & i & - & 1 & j & - & j & j & 1 & 1 & i & j & - & j & 1 & 1 & j & 1 & 1 & - & 1 & i & - & - & j & -
\\
 - & j & 1 & - & - & j & i & i & j & 1 & j & j & 1 & - & - & 1 & 1 & j & - & - & i & 1 & - & 1 & - & j & - & - & i & - & j & j & - & 1 & i & i & - & j & - & 1 & j & - & 1 & 1 & 1 & j & 1 & - & i & -
\\
 - & i & - & 1 & 1 & i & j & i & j & j & 1 & j & 1 & - & - & - & - & i & 1 & - & j & - & 1 & 1 & 1 & j & 1 & - & j & 1 & j & j & - & - & i & i & - & j & - & - & j & - & - & 1 & - & j & - & 1 & i & 1
\\
 - & i & 1 & - & - & j & i & j & j & j & j & 1 & 1 & - & - & 1 & 1 & j & - & 1 & j & 1 & - & - & 1 & i & 1 & 1 & j & 1 & i & j & - & - & j & i & 1 & j & - & - & i & - & - & - & - & i & - & - & j & 1
\\
 i & - & j & j & j & 1 & - & 1 & - & - & - & - & 1 & i & i & i & i & 1 & j & i & 1 & j & i & j & i & 1 & i & j & 1 & j & 1 & 1 & i & i & - & - & j & 1 & i & i & 1 & j & i & i & j & 1 & j & i & 1 & i
\\
 j & - & j & i & j & 1 & - & 1 & 1 & 1 & 1 & 1 & i & 1 & j & j & i & 1 & j & i & 1 & j & i & i & i & 1 & j & j & - & j & - & - & i & j & 1 & 1 & i & - & i & j & - & i & i & i & i & 1 & i & i & - & i
\\
 j & 1 & i & j & j & - & 1 & - & 1 & 1 & 1 & 1 & i & j & 1 & i & i & - & i & j & - & i & i & j & i & - & i & j & 1 & j & 1 & - & i & i & 1 & 1 & j & - & i & i & - & i & i & j & j & 1 & j & i & 1 & i
\\
 i & 1 & j & i & i & 1 & - & - & 1 & - & 1 & - & i & j & i & 1 & i & - & j & j & 1 & i & i & i & j & 1 & j & i & 1 & i & 1 & 1 & i & i & 1 & - & i & - & i & j & - & i & j & i & j & 1 & j & j & 1 & j
\\
 j & 1 & i & i & i & - & - & 1 & 1 & - & 1 & - & i & i & i & i & 1 & 1 & i & i & - & j & i & i & j & - & j & i & 1 & j & 1 & - & i & j & 1 & - & j & 1 & j & j & 1 & j & j & j & i & 1 & i & i & 1 & i
\\
 - & j & - & 1 & - & j & i & i & i & j & i & j & - & - & 1 & 1 & - & 1 & 1 & 1 & i & - & - & - & 1 & j & - & - & i & 1 & j & j & 1 & 1 & j & j & 1 & j & - & - & i & - & - & - & 1 & j & - & 1 & j & -
\\
 i & 1 & i & i & i & 1 & - & 1 & 1 & 1 & - & 1 & j & j & i & j & i & - & 1 & j & 1 & i & j & j & i & 1 & j & i & - & i & 1 & - & i & j & 1 & - & j & 1 & i & i & 1 & j & i & j & j & - & i & i & - & i
\\
 i & 1 & j & i & j & 1 & 1 & - & - & 1 & 1 & - & i & i & j & j & i & - & j & 1 & 1 & i & i & i & j & - & i & j & - & i & - & 1 & i & i & 1 & 1 & j & 1 & j & i & 1 & j & j & j & i & - & i & i & 1 & i
\\
 1 & i & - & - & - & i & j & j & j & i & j & j & - & - & 1 & - & 1 & i & - & - & 1 & 1 & 1 & - & 1 & j & - & 1 & j & - & i & i & - & 1 & i & j & 1 & j & - & - & j & - & - & - & 1 & j & 1 & 1 & j & -
\\
 j & - & j & i & j & 1 & 1 & - & 1 & - & 1 & - & j & j & i & i & j & 1 & i & i & - & 1 & i & i & i & 1 & j & i & 1 & i & - & 1 & j & i & 1 & 1 & j & 1 & i & i & 1 & i & i & j & j & - & i & j & - & i
\\
 i & 1 & j & i & j & - & 1 & 1 & 1 & 1 & - & 1 & i & i & i & i & i & 1 & j & i & - & i & 1 & i & i & - & j & j & 1 & i & 1 & - & i & j & - & 1 & i & 1 & j & i & 1 & i & j & i & j & - & j & j & - & j
\\
 j & - & i & i & i & 1 & 1 & - & 1 & - & - & 1 & j & i & j & i & i & 1 & j & i & 1 & i & i & 1 & j & 1 & i & i & - & i & 1 & - & j & j & - & 1 & j & 1 & i & i & - & i & j & j & i & 1 & j & i & 1 & j
\\
 j & - & i & i & j & 1 & 1 & 1 & 1 & 1 & - & - & i & i & i & j & j & - & i & j & - & i & i & j & 1 & 1 & i & i & - & j & - & 1 & i & j & 1 & - & i & 1 & j & i & 1 & i & i & i & i & 1 & j & j & 1 & j
\\
 - & j & 1 & 1 & - & i & j & j & i & j & j & i & - & - & 1 & - & 1 & j & - & 1 & j & - & 1 & - & - & 1 & 1 & 1 & i & 1 & j & i & - & 1 & j & j & - & i & - & - & j & - & 1 & 1 & - & j & - & - & i & -
\\
 i & - & i & j & i & 1 & 1 & - & 1 & 1 & - & - & i & j & i & j & j & 1 & j & i & 1 & j & j & i & i & - & 1 & j & 1 & i & 1 & - & j & i & 1 & - & i & - & j & i & 1 & i & i & j & i & 1 & i & i & 1 & j
\\
 i & - & i & i & i & 1 & 1 & 1 & - & 1 & 1 & - & j & j & j & i & i & 1 & i & j & - & i & j & i & i & - & j & 1 & 1 & i & - & 1 & i & j & - & 1 & j & - & i & j & 1 & j & i & i & i & 1 & j & i & 1 & j
\\
 1 & j & 1 & 1 & - & j & j & j & j & i & j & j & - & 1 & - & - & - & i & 1 & 1 & j & - & - & 1 & 1 & i & - & - & 1 & - & i & i & - & 1 & j & i & - & j & 1 & 1 & i & - & - & 1 & - & j & - & - & j & -
\\
 i & 1 & j & j & j & - & - & 1 & 1 & 1 & - & - & j & j & j & i & j & - & i & i & 1 & i & i & i & j & - & i & i & 1 & 1 & 1 & 1 & j & j & 1 & 1 & i & 1 & i & i & - & j & i & i & i & 1 & i & i & - & j
\\
 1 & j & - & - & 1 & i & i & i & j & j & j & i & - & 1 & - & - & - & j & - & 1 & i & 1 & - & - & 1 & j & - & 1 & i & - & 1 & j & - & - & j & j & - & i & - & 1 & j & 1 & - & 1 & - & j & - & 1 & j & 1
\\
 1 & i & - & - & 1 & j & i & i & j & j & j & j & - & 1 & 1 & - & 1 & j & 1 & - & i & - & 1 & 1 & - & i & 1 & - & i & - & j & 1 & - & - & j & j & 1 & j & 1 & - & j & - & 1 & - & - & i & - & - & j & -
\\
 i & - & i & j & i & 1 & 1 & - & - & 1 & 1 & 1 & i & i & i & i & i & - & i & i & 1 & j & i & j & i & 1 & j & i & 1 & j & 1 & 1 & 1 & j & 1 & 1 & j & - & j & i & - & j & j & i & i & - & i & j & - & j
\\
 i & - & i & i & i & 1 & 1 & 1 & - & - & 1 & 1 & i & j & i & i & j & - & j & i & - & i & j & j & j & - & i & j & - & j & 1 & 1 & j & 1 & 1 & 1 & i & 1 & i & j & 1 & i & j & i & j & 1 & i & i & - & i
\\
 1 & j & 1 & 1 & - & j & j & j & i & i & i & j & 1 & - & - & - & - & j & - & - & i & - & 1 & 1 & - & j & - & 1 & j & - & j & j & - & - & 1 & j & 1 & i & - & 1 & i & - & 1 & - & - & i & - & 1 & j & 1
\\
 1 & j & - & 1 & - & i & j & j & j & i & i & i & 1 & - & - & 1 & 1 & j & 1 & - & j & - & - & - & 1 & j & 1 & - & i & - & j & j & - & - & j & 1 & - & i & 1 & - & j & 1 & - & - & 1 & i & 1 & - & j & -
\\
 i & 1 & i & i & i & 1 & - & 1 & 1 & 1 & 1 & - & j & i & j & i & j & - & j & j & - & j & i & j & i & 1 & i & j & 1 & i & 1 & - & j & i & - & 1 & 1 & 1 & j & j & - & i & i & i & i & - & i & j & 1 & i
\\
 1 & j & - & 1 & - & j & j & i & j & j & j & j & - & 1 & 1 & 1 & - & j & - & - & j & - & - & - & - & i & 1 & 1 & j & - & i & j & 1 & - & i & i & - & 1 & - & - & i & 1 & 1 & - & - & j & 1 & 1 & i & -
\\
 i & - & j & j & j & - & 1 & 1 & - & 1 & 1 & 1 & i & i & i & i & j & 1 & i & j & 1 & i & j & i & j & 1 & j & i & - & i & 1 & - & j & i & 1 & - & j & 1 & 1 & j & - & i & i & i & i & - & j & i & 1 & i
\\
 i & 1 & i & j & i & - & - & 1 & - & - & 1 & 1 & i & j & i & j & j & 1 & i & i & 1 & i & i & i & i & 1 & i & j & - & i & - & 1 & i & j & - & 1 & j & 1 & j & 1 & - & i & i & j & j & 1 & i & j & 1 & j
\\
 - & i & 1 & - & - & i & j & j & j & j & j & i & - & 1 & 1 & 1 & - & i & - & - & j & - & - & 1 & - & j & - & - & i & 1 & j & j & 1 & - & i & j & 1 & i & 1 & 1 & 1 & - & - & - & - & j & 1 & - & j & 1
\\
 j & 1 & i & i & i & - & - & 1 & - & 1 & 1 & 1 & j & i & i & i & j & 1 & j & j & 1 & i & i & i & i & 1 & i & j & 1 & j & - & 1 & j & i & 1 & - & i & - & i & i & 1 & 1 & j & j & i & - & j & i & - & j
\\
 j & 1 & j & i & j & 1 & - & - & - & - & 1 & 1 & i & i & i & j & j & 1 & i & j & 1 & i & j & j & i & - & i & i & 1 & i & 1 & - & j & j & - & 1 & i & - & i & i & 1 & j & 1 & i & i & 1 & i & j & 1 & i
\\
 i & 1 & i & j & i & - & 1 & - & 1 & - & - & 1 & i & i & j & i & j & 1 & j & j & 1 & j & i & j & i & - & j & i & - & i & - & 1 & i & i & 1 & 1 & i & 1 & i & j & 1 & j & i & 1 & i & 1 & j & j & - & i
\\
 j & 1 & i & j & j & - & 1 & - & - & - & 1 & 1 & j & i & j & j & i & - & j & i & - & j & j & i & i & 1 & i & i & 1 & i & 1 & 1 & i & j & 1 & - & i & 1 & i & j & 1 & i & i & i & 1 & - & i & i & 1 & j
\\
 1 & j & 1 & - & - & j & j & i & i & j & j & i & - & - & - & - & - & j & 1 & 1 & j & 1 & 1 & - & - & j & - & - & j & - & j & i & 1 & - & i & i & 1 & j & 1 & - & j & 1 & - & - & 1 & 1 & - & - & i & 1
\\
 i & - & j & i & i & - & 1 & 1 & 1 & - & 1 & 1 & j & i & j & j & i & 1 & i & i & - & i & j & j & j & 1 & i & j & 1 & i & 1 & 1 & i & i & 1 & - & i & - & j & i & - & j & i & j & i & 1 & 1 & j & - & i
\\
 i & 1 & i & j & j & 1 & - & 1 & 1 & 1 & - & 1 & i & i & i & j & i & - & i & i & - & j & j & i & j & 1 & i & i & 1 & i & - & 1 & j & i & - & 1 & j & - & i & j & 1 & i & j & j & i & 1 & j & 1 & - & i
\\
 - & i & 1 & - & - & j & i & j & j & i & i & j & - & 1 & - & - & - & j & 1 & - & j & 1 & 1 & - & - & i & - & - & j & 1 & j & j & 1 & 1 & j & j & - & i & - & - & j & 1 & - & 1 & - & i & 1 & 1 & 1 & -
\\
 j & 1 & i & i & i & - & 1 & - & 1 & 1 & - & - & i & i & i & j & i & 1 & i & i & 1 & i & j & j & j & 1 & j & j & 1 & j & - & 1 & j & i & - & 1 & i & 1 & i & j & - & j & i & i & j & - & i & i & 1 & 1
\end{array}\right]
\]

\end{appendices}

\begin{thebibliography}{99}
\bibitem{max-det}
 Patrick Browne, Ronan Egan, Fintan Hegarty, Padraig \'{O}
              Cath\'{a}in, A survey of the Hadamard maximal determinant problem, {\sl Electron. J. Combin.} 28 (2021), no. 4, Paper No. 4.41, 35 pp.
 
 \bibitem{koji-sho}
 Mitsugu Hirasaka, Koji Momihara, Sho Suda,  A new approach to the excess problem of Hadamard matrices, {\sl  Algebr. Comb.} 1 (2018), no. 5, 697--722.
 
\bibitem{order32}
W. H. Holzmann, H. Kharaghani, M. T. Lavassani, The excess problem and some excess inequivalent matrices of order 32, R. C. Bose Memorial Conference (Fort Collins, CO, 1995), {\sl J. Statist. Plann. Inference} 72 (1998), no. 1-2, 381--391.

\bibitem{wolfhadi}
W. H. Holzmann, H. Kharaghani, On the excess of Hadamard matrices, Twenty-second Manitoba Conference on Numerical Mathematics and Computing (Winnipeg, MB, 1992), {\sl Congr. Numer.}, 92 (1993), 257--260. 

\bibitem{ks} 
H. Kharaghani, J. Seberry, The excess of complex Hadamard matrices, {\sl Graphs Combin.}, 9 (1993), no. 1, 47--56.

\bibitem{skew-r}
H. Kharaghani, B. Tayfeh-Rezaie, V. Zaitsev, On regular quaternary Hadamard matrices, {\sl J. Combin. Des.}, 34 (2026), no. 5, 215--227.

\bibitem{kbtz}
Hadi Kharaghani, Darcy Best, Sho Suda, Behruz Tayfeh-Rezaie, Vlad Zaitsev, On skew-regular Hadamard matrices {\sl J. Algebraic Combin.} 63 (2026), no. 1, Paper No. 12, 13 pp.

\bibitem{hadi}
H. Kharaghani, An infinite class of Hadamard matrices of maximal excess, {\sl Discrete Math.}, 89 (1991), no. 3, 307--312.

\bibitem{mou}
A. C. Mukhopadhyay, Some infinite classes of Hadamard matrices, {\sl J. Combin. Theory Ser. A}, 25 (1978), 128--141.

\bibitem{ksexcess}
C. Koukouvinos, J. Seberry, Hadamard matrices of order $\equiv 8 \pmod{16}$ with maximal excess, {\sl Discrete Math.}, 92 (1991), 173--176.

\bibitem{cohn}
J. H. E. Cohn, Hadamard matrices and some generalisations, {\sl Amer. Math. Monthly}, 72 (1965), no. 5, 515--518.

\bibitem{paley1933}
R. E. A. C. Paley, On orthogonal matrices, {\sl J. Math. Phys.}, 12 (1933), no. 1-4, 311--320.
 
\bibitem{kou}
S. Kounias, N. Farmakis, On the excess of Hadamard matrices, {\sl Discrete Math.}, 68 (1988), no. 1, 59--69.

\bibitem{kou40}
N. Farmakis, S. Kounias, The excess of Hadamard matrices and optimal designs, {\sl Discrete Math.}, 67 (1987), no. 1, 165--176.

\bibitem{koukouvinos1990}
C. Koukouvinos, S. Kounias, Construction of some Hadamard matrices with maximum excess, {\sl Discrete Math.}, 85 (1990), no. 3, 295--300.

\bibitem{SY}
J. Seberry, M. Yamada, Hadamard matrices, sequences, and block designs, {\sl Contemporary Design Theory}, Wiley, New York, 1992, 431--560.

\bibitem{GS}
A. V. Geramita, J. Seberry, {\sl Orthogonal Designs: Quadratic Forms and Hadamard Matrices}, Marcel Dekker, New York, 1979.

\bibitem{karol}
D. Goyeneche, W. Bruzda, O. Turek, D. Alsina, K. \.{Z}yczkowski, Local hidden variable values without optimization procedures, {\sl Quantum}, 7 (2023), 911.
\end{thebibliography}
\end{document}